\definecolor{qqqqff}{rgb}{0,0,1}
\definecolor{xdxdff}{rgb}{0.49,0.49,1}
\definecolor{uuuuuu}{rgb}{0.27,0.27,0.27}
\definecolor{cqcqcq}{rgb}{0.75,0.75,0.75}
\newtheorem{theorem}{Theorem}[section]
\newtheorem{lemma}[theorem]{Lemma}
\newtheorem{proposition}[theorem]{Proposition}
\theoremstyle{corollary}
\newtheorem{corollary}[theorem]{Corollary}
\theoremstyle{definition} 
\newtheorem{definition}[theorem]{Definition}
\newtheorem{definition-lemma}[theorem]{Definition-Lemma}
\theoremstyle{remark}
\newtheorem{remark}[theorem]{Remark}
\numberwithin{equation}{section}
\newcommand{\R}{\mathbb{R}}
\newcommand{\Z}{\mathbb{Z}}
\newcommand{\Q}{\mathbb{Q}}
\newcommand{\bm}{\mathbf B_-}
\newcommand{\bp}{\mathbf B_+}
\def\Supp{\operatorname{Supp}}
\def\Int{\operatorname{Int}}
\def\Supp{\operatorname{Supp}}
\def\bNM{\operatorname{bNM}}
\def\NM{\operatorname{NM}}
\def\NE{\operatorname{NE}}
\def\Nef{\operatorname{Nef}}
\def\Amp{\operatorname{Amp}}
\def\Bigdiv{\operatorname{Big}}
\newcommand\eps{\varepsilon}
\title[On a generalized Batyrev's cone conjecture]{On a generalized Batyrev's cone conjecture}
\begin{document}

\author{Sung Rak Choi}
\address{Department of mathematics, Yonsei University, 50 Yonsei-Ro, Seodaemun-Gu, Seoul 03722, Korea}
\email{sungrakc@yonsei.ac.kr}

\author{Yoshinori Gongyo}
\address{Graduate School of Mathematical Sciences, the University of Tokyo,
3-8-1 Komaba, Meguro-ku, Tokyo 153-8914, Japan}
\email{gongyo@ms.u-tokyo.ac.jp}



\begin{abstract}
We discuss some variants of cone theorem for movable curves in any codimensions.
\end{abstract}


\maketitle

\section{Introduction}

One of the fundamental results behind the development of the minimal model program (MMP) is the following celebrated theorem.

\begin{theorem}[{Cone Theorem, \cite[Theorem 3.5]{KM}}]\label{thrm-Cone theorem}
Let $X$ be a $\Q$-factorial normal projective variety and $(X,\Delta)$ be a klt pair.
Then there exists a countable set $I$ of $(K_X+\Delta)$-negative extremal rays of $\overline{\NE}(X)$ such that
$$
\overline{\NE}(X)=\overline{\NE}(X)_{K_X+\Delta\geq 0}+\sum_{R_i\in I}R_i
$$
and for any ample divisor $H$, there exists a finite subset $J_H\subseteq I$ such that
$$
\overline{\NE}(X)=\overline{\NE}(X)_{K_X+\Delta+H\geq 0}+\sum_{R_i\in J_H}R_i.
$$
For any ample divisor $H$, each extremal ray $R_i\in J_H$ is spanned by a rational curve $C_i$ on $X$, i.e., $R_i=\mathbb R_{\geq0}[C_i]$.
\end{theorem}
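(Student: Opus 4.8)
The plan is to follow the classical proof of the Cone Theorem (see \cite{KM}): everything is deduced from the \emph{Rationality Theorem} together with Mori's \emph{bend-and-break}. Write $n=\dim X$ and $L=K_X+\Delta$. The technical heart is the Rationality Theorem, that for an ample $\Q$-divisor $H$ with $L$ not nef the number
$$
r(H)=\sup\{\,t\ge 0\ :\ H+tL\ \text{is nef}\,\}
$$
is rational, with denominator bounded in terms of $n$ and the Cartier indices of $H$ and $L$. I would prove this via the Base Point Free Theorem and Kawamata--Viehweg vanishing: supposing $r(H)$ irrational or with large denominator, one studies the linear systems $\lvert m(aH+\lfloor tL\rfloor)\rvert$ for suitable integers $a,m$ and $t$ just below $r(H)$ and, by Noetherian induction on their base loci together with vanishing-theorem estimates for the number of sections, reaches a contradiction. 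This step, where the positivity input is entirely consumed, is the one I expect to be the main obstacle.

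Granting the Rationality Theorem, the next step produces the rational curves. Given an $L$-negative extremal ray $R$, the Rationality Theorem lets one place a nef $\Q$-Cartier divisor $M$ on a rational supporting hyperplane with $M^{\perp}\cap\overline{\NE}(X)=R$ and $M-L$ ample; the Base Point Free Theorem then makes $M$ semiample, giving a contraction $\varphi\colon X\to Y$ of exactly the curves numerically proportional to those in $R$. On a positive-dimensional fibre of $\varphi$ I would run bend-and-break --- reduction to characteristic $p$, deformation of a morphism $f\colon C\to X$ with $-f^{*}L$ of large degree, and the breaking argument --- to obtain a rational curve $C$ with $[C]\in R$ and $0<-L\cdot C\le 2n$. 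This is Mori's theorem; $\Q$-factoriality and the klt hypothesis on $(X,\Delta)$ are exactly what let it, and the contraction above, go through (after passing to a log resolution where needed).

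Finally I assemble the cone. Let $V\subseteq\overline{\NE}(X)$ be the closed convex cone generated by $\overline{\NE}(X)_{L\ge 0}$ and the classes of all rational curves $C$ with $0<-L\cdot C\le 2n$; boundedness of such curves makes $V$ closed. If $V\neq\overline{\NE}(X)$, I would separate $V$ from a class $z\notin V$ by a rational divisor class $D$ strictly positive on $V\setminus\{0\}$; then $D$ is in particular positive on $\overline{\NE}(X)_{L\ge0}\setminus\{0\}$, so adding a small ample class and invoking the Rationality Theorem produces a nef $\Q$-divisor on the boundary of the nef cone whose null face is $L$-negative, and the contraction and bend-and-break of the previous step exhibit there a rational curve from the generating set of $V$ violating positivity of $D$ --- a contradiction, so $V=\overline{\NE}(X)$. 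For a fixed ample $H$, any extremal ray $R_i=\R_{\ge0}[C_i]$ occurring here with $(L+H)\cdot R_i<0$ satisfies $0<H\cdot C_i<-L\cdot C_i\le 2n$, and ampleness of $H$ leaves only finitely many such classes $[C_i]$; hence $J_H$ is finite, its rays are spanned by rational curves by construction, and local finiteness of the rays away from $\{L=0\}$ shows $\overline{\NE}(X)_{L+H\ge0}+\sum_{R_i\in J_H}R_i$ is already closed and equal to $\overline{\NE}(X)$. Taking ample divisors $H_k$ tending to $0$ and setting $I=\bigcup_k J_{H_k}$ yields the countable generating set, completing the proof.
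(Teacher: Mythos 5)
The paper does not prove this statement; it is quoted verbatim (up to a likely typo in the citation number, since in Koll\'ar--Mori the Cone Theorem is Theorem~3.7 and Theorem~3.5 is the Rationality Theorem) as a foundational result from \cite{KM}. Your sketch reproduces exactly the classical chain in that reference --- Non-vanishing and Base Point Free feeding into the Rationality Theorem, the Rationality Theorem supplying the cone decomposition and the contraction morphism, and Mori's bend-and-break on fibres of the contraction furnishing the rational curve in each $(K_X+\Delta+H)$-negative extremal ray --- so it matches the approach the paper relies on.

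One remark on a step you state a little cavalierly: ``boundedness of such curves makes $V$ closed'' is not quite the right reason. Closedness of $V$ follows because the classes of rational curves $C$ with $0<-L\cdot C\le 2n$ form a \emph{discrete} subset of $N_1(X)$ in any half-space $\{H\le \text{const}\}$ with $H$ ample (their $H$-degrees are bounded whenever their $L$-degrees are, since $L+\epsilon H$ remains bounded below on the generating set), hence only finitely many generators lie in any compact slice, and one can pass to a limit. You should also be a bit more careful in the separation argument: having produced a class $D$ positive on $V\setminus\{0\}$, to apply Rationality you need $D$ (after scaling and perturbing by a small ample) to be nef and to write $L$ with it; the standard move is to take $M=D+\epsilon H$ nef and big for small $\epsilon$ and apply Rationality to $(M,L)$, which produces a supporting nef divisor whose null face is $L$-negative and \emph{not} contained in $V$, contradicting that bend-and-break on its contraction yields a generator of $V$. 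These are details, not obstacles; the plan is the standard one and sound.
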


This describes the structure of the Mori cone $\overline\NE(X)$.
Roughly speaking, by contracting each $(K_X+\Delta)$-negative extremal ray $R_i\in I$, we proceed to find another birationally equivalent variety $X'$ which is closer to a minimal model of $(X,\Delta)$. In this sense, the Cone Theorem basically tells us how to run the minimal model program.
In a similar manner, although partially, the following gives a description of the movable cone $\overline{\NM}(X)$, the closure of the cone spanned by movable curves on $X$.
We will use the notation
$$\overline{\NM}(X,\Delta):=\overline{\NE}(X)_{K_X+\Delta\geq 0}+\overline{\NM}(X).$$

\begin{theorem}[{cf.\cite{Batyrev},\cite{Araujo},\cite{Lehmann-cone}}]\label{thrm-batyrev}
Let $X$ be a $\Q$-factorial normal projective variety and $(X,\Delta)$ be a klt pair.
Then there exists a countable set $I'$ of $(K_X+\Delta)$-negative extremal rays $R_i$ of $\overline{\NM}(X,\Delta)$ such that
$$
\overline{\NM}(X,\Delta)=\overline{\NE}(X)_{K_X+\Delta\geq 0}+\sum_{R_i\in I'}R_i
$$
and for any ample divisor $H$, there exists a finite subset $J'_H\subseteq I'$ such that
$$
\overline{\NM}(X,\Delta+H)=\overline{\NE}(X)_{K_X+\Delta+H\geq 0}+\sum_{R_i\in J'_H}R_i.
$$
For any ample divisor $H$, each extremal ray $R_i\in J'_H$ is spanned by a movable curve $C_i$ on $X$, i.e., $R_i=\mathbb R_{\geq0}[C_i]$.
\end{theorem}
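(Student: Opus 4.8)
Our plan is to dualize the statement and combine the duality of Boucksom--Demailly--Paun--Peternell with the Cone Theorem~\ref{thrm-Cone theorem} applied on birational models of $X$. Recall that this duality (in the form valid for $\Q$-factorial projective varieties, obtained by pulling back from a resolution) identifies the closed movable cone of curves with the dual of the pseudoeffective cone of divisors, $\overline{\NM}(X)=\overline{\Eff}(X)^{\vee}$ inside $N_1(X)$. A preliminary reduction shows that every $(K_X+\Delta)$-negative extremal ray $R$ of $\overline{\NM}(X,\Delta)=\overline{\NE}(X)_{K_X+\Delta\geq 0}+\overline{\NM}(X)$ already lies in $\overline{\NM}(X)$: writing a generator of $R$ as $\beta+\gamma$ with $\beta\in\overline{\NE}(X)_{K_X+\Delta\geq 0}$ and $\gamma\in\overline{\NM}(X)$, the bound $(K_X+\Delta)\cdot\beta\geq 0$ forces $(K_X+\Delta)\cdot\gamma<0$, hence $\gamma\neq 0$, and extremality gives $\gamma\in R$. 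Thus $R$ is spanned by an honest movable curve class of negative $(K_X+\Delta)$-degree.

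Fix an ample divisor $H$, which we may take with coefficients small enough that $(X,\Delta+H)$ is klt; its boundary $\Delta+H$ is then big, so minimal models, finiteness of models, and termination of the minimal model program with scaling of $H$ are all available by the results of Birkar--Cascini--Hacon--McKernan. Given a ray $R$ as above that is in addition $(K_X+\Delta+H)$-negative, the crucial step is to run a $(K_X+\Delta)$-MMP with scaling of $H$ arranged so as to ``target'' the extremal ray $R$. Since $\gamma$ is a movable class, no step of this program contracts the ray it spans; moreover the program cannot terminate in a minimal model, since the surviving movable class would contradict nefness of $K_X+\Delta$ on the final model. Hence it terminates in a Mori fibre space $g\colon Y\to Z$ on a birational model $\pi\colon X\dashrightarrow Y$, and $R$ is recovered as the ray spanned by the class of a general fibre of $g$, transported back to $X$ through the steps of the MMP -- each of which induces an isomorphism (flips) or a surjection (divisorial contractions) of $N_1$-spaces. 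Bend-and-break applied to the fibres of $g$ produces, through a general point, a rational curve of $(K_Y+\Delta_Y)$-degree at most $2\dim X$; its birational image $C_R$ on $X$ is then a movable rational curve with $R=\R_{\geq 0}[C_R]$ and $0<-(K_X+\Delta)\cdot C_R\leq 2\dim X$.

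Given this, the two displayed decompositions follow by the standard argument. For finiteness of $J'_H$: the normalized classes $[C_R]/(H\cdot C_R)$ of the generators of the $(K_X+\Delta+H)$-negative extremal rays lie in the compact affine slice $\{\xi\in\overline{\NM}(X):H\cdot\xi=1\}$, and the uniform length bound together with $(K_X+\Delta+H)\cdot C_R<0$ keeps them bounded away from the hyperplane $(K_X+\Delta)=0$; local discreteness of extremal rays away from that hyperplane, inherited from Theorem~\ref{thrm-Cone theorem} on the models, forces only finitely many such rays, after which a routine separation argument shows that these rays together with $\overline{\NE}(X)_{K_X+\Delta+H\geq 0}$ span $\overline{\NM}(X,\Delta+H)$. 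Letting $H$ range over a null sequence of ample classes and taking the union of the corresponding finite sets $J'_H$ produces the countable set $I'$ and the first decomposition, the $R_i$ accumulating only along $(K_X+\Delta)=0$.

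The principal obstacle is that $\overline{\NM}(X)$, unlike $\overline{\NE}(X)$, is not governed by Mori contractions on $X$ itself: each $(K_X+\Delta)$-negative extremal ray of the movable cone becomes visible only after a birational modification. One must therefore run minimal model programs, track carefully how movable curve classes and the extremal-ray decomposition transform under flips and divisorial contractions, and descend the generating rational curves and the length and finiteness estimates back to $X$, checking in particular that the ray and its generator are independent of the choices made along the way; this bookkeeping is exactly what is carried out in \cite{Batyrev}, \cite{Araujo} and \cite{Lehmann-cone}.
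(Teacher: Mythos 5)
Your approach follows the classical line of \cite{Batyrev}, \cite{Araujo}, \cite{Lehmann-cone}: reduce to $\overline{\NM}(X)$, run an MMP ending in a Mori fibre space, apply bend-and-break (or Kawamata's length bound) to produce rational curves of bounded $(K_X+\Delta)$-degree, and deduce finiteness. The paper instead obtains Theorem~\ref{thrm-batyrev} as the $l=0$ case of Theorem~\ref{main theorem}, whose engine is a generalized Rationality Theorem (Theorem~\ref{g-rationality thm}), itself resting on finiteness of models (Theorem~\ref{thrm-finite models}). That theorem gives a uniform denominator bound for the thresholds $r_{d-l}(H';K_X+\Delta+A)$, and Proposition~\ref{prop-main prop} then mimics the discreteness argument of the classical Cone Theorem in projective coordinates to conclude finiteness of $\Sigma^l(X,\Delta+H)$, with no curve production at all. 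The rationality-theorem route works uniformly in $l$, which is the whole point of the paper; a bend-and-break argument would not obviously produce curves that are movable in codimension $l$ for intermediate $l$. Your route, conversely, gives the extra information that the generating curve $C_i$ on $X$ may be taken rational.

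Two places in your write-up need repair before they constitute a proof. First, the finiteness step: ``local discreteness of extremal rays away from $(K_X+\Delta)=0$, inherited from the Cone Theorem on the models'' cannot simply be transferred, since a priori each ray lives on a different model, and the Cone Theorem on any single model says nothing about accumulation of rays coming from infinitely many models. The actual argument, in the references and in the paper alike, invokes finiteness of models (Theorem~\ref{thrm-finite models}, i.e.\ \cite[Cor.~1.1.5]{bchm}) to reduce to finitely many $Y$'s before applying the Cone Theorem on each; alternatively, use your length bound directly: $[C_R]$ is an integral class with $0<H\cdot C_R<-(K_X+\Delta)\cdot C_R\leq 2\dim X$, so only finitely many classes can occur. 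As stated, though, your sentence has the length bound doing the wrong job --- ``bounded away from the hyperplane'' already follows from $(K_X+\Delta+H)$-negativity alone. Second, the assertion that ``its birational image $C_R$ on $X$'' spans $R$ requires the general fibre of $g$ (and the rational curve in it) to avoid the images of the exceptional loci of all MMP steps, so that the proper transform computes the numerical pull-back of Definition~\ref{def-num pullback}; this holds for a sufficiently general member of the dominating family of rational curves, but it must be argued, not asserted. Similarly, ``arranged so as to target $R$'' should be made precise as in Step~4 of the paper's outline: pick an ample $A$ with $R=\overline{\NM}(X,\Delta+H)\cap\{(K_X+\Delta+H+A)\cdot\eta=0\}$ and run the MMP with scaling of a multiple of $A$.
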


Note that the curves $C_i$ in Theorem \ref{thrm-batyrev} are not guaranteed to be rational curves as is the case in Theorem \ref{thrm-Cone theorem}. Nonetheless, each ray $R_i\in J'_H$ is associated to an extremal ray of the Mori cone $\overline{\NE}(X')$ of some birational model $X'$ of $X$. By the Cone Theorem, such extremal ray is spanned by a rational curve on $X'$ and it gives rise to a Mori fibre space.

Our goal in this paper is to treat the above two results in a fixed framework and obtain a common generalization for both results simultaneously.

Let $X$ be a $\Q$-factorial normal projective variety of dimension $d$ and $l$ be an integer such that $0\leq l\leq d-1$. A curve $C$ on $X$ is said to be \emph{movable in codimension $l$} if it belongs to a family of curves moving in a subvariety of codimension $\leq l$ in $X$.
We denote by $\overline{\NM}^l(X)$ the closure of the cone in $N_1(X)$ spanned by the classes of curves on $X$ that are movable in codimension $l$.
Note that if $\varphi:X\dashrightarrow X'$ is a small $\Q$-factorial modification, then $N^1(X)\cong N^1(X')$. Hence their dual spaces are also isomorphic: $N_1(X)\cong N_1(X')$.
Through this isomorphism, any curve $C$ on $X'$ defines a class in $N_1(X)$ although it is unclear whether such a class in $N_1(X)$ can be represented by a curve on $X$.
For a birational map $\varphi:X\dashrightarrow X'$ as above, we denote by $\overline{\NM}^l(X,X')\subseteq N_1(X)$ the image of the closed convex cone in $N_1(X')$ generated by the curves movable in some birational images of the subvarieties in $X$ of codimension $\leq l$. 
We also denote by $\overline{\bNM}^l(X)$ the closure of the cone in $N_1(X)$ spanned by all the classes in $\overline{\NM}^l(X,X')$, that is,
$$\overline{\bNM}^l(X)=\overline{\sum_{\varphi:X\dashrightarrow X'}\overline{\NM}^l(X,X')}$$
where the summation is taken over all the small $\Q$-factorial modifications $\varphi:X\dashrightarrow X'$. Note that the cone $\overline{\bNM}^{d-1}(X)$ coincides with the Mori cone $\overline{\NE}(X)$ since $\overline{\NM}^l(X,X')\subseteq\overline{\Amp}_l(X)^{\vee}$ holds for any $l$ and $\overline{\Amp}_1(X)^{\vee}=\overline{\NE}(X)$ by Kleiman.
The equality $\overline{\bNM}^0(X)=\overline{\NM}^0(X)$ follows from \cite{BDPP}.

Let $(X,\Delta)$ be a pair with some effective $\R$-divisor $\Delta$ on $X$.
The following cone is of our most interest in this paper: for an integer $l$ such that $0\leq l\leq d-1$,
$$\overline{\NE}^l(X,\Delta):=\overline{\NE}(X)_{K_X+\Delta\geq0}+\overline{\bNM}^l(X).$$
Note that the inclusion $\overline{\NE}^l(X,\Delta)\subseteq \overline{\NE}(X)$ holds in general and it is an equality when $l=d-1$.
For any divisor $H$, we denote by $\Sigma^l(X,\Delta)_{K_X+\Delta+H<0}$ the set of $(K_X+\Delta+H)$-negative extremal rays of the cone $\overline{\NE}^l(X,\Delta)$. The rays in $\Sigma^l(X,\Delta)$ are called the $l$-th coextremal rays of $(X,\Delta)$. By definition, $(d-1)$-th coextremal rays of $(X,\Delta)$ are the $(K_X+\Delta)$-negative extremal rays of $\overline{\NE}(X)$.

The following is our main result in this paper.
The cases where $l=d-1$ and $l=0$ correspond to the Cone Theorem (Theorem \ref{thrm-Cone theorem}) and Theorem \ref{thrm-batyrev}, respectively.

\begin{theorem}\label{main theorem}
Let $X$ be a $\Q$-factorial normal projective variety of dimension $d$ and $(X,\Delta)$ be a klt pair.
Then the following hold for each integer $l$ such that $0\leq l\leq d-1$:
\begin{enumerate}
\item The set $\Sigma^l(X,\Delta)$ of $l$-th coextremal rays of $(X,\Delta)$ is countable and we have
$$
\overline{\NE}^l(X,\Delta)=\overline{\NE}(X)_{K_X+\Delta\geq0}+\sum_{R_i\in \Sigma^l(X,\Delta)} R_i.
$$
\item For any ample divisor $H$, the set $\Sigma^l(X,\Delta+H)$ of $l$-th coextremal rays of $(X,\Delta+H)$ is a finite subset of $\Sigma^l(X,\Delta)$ and we have
$$
\overline{\NE}^l(X,\Delta+H)=\overline{\NE}(X)_{K_X+\Delta+H\geq0}+\sum_{R_i\in \Sigma^l(X,\Delta+H)} R_i.
$$
\item Furthermore, each ray $R_i\in \Sigma^l(X,\Delta+H)$ for any ample divisor $H$ is spanned by the numerical pull-back of a rational curve $C_i$ on some birational model $X'$ of $X$.
\end{enumerate}
\end{theorem}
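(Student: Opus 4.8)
The plan is to bootstrap from the two boundary cases: $l=d-1$ is the Cone Theorem (Theorem~\ref{thrm-Cone theorem}) and $l=0$ is Theorem~\ref{thrm-batyrev}, so the content is entirely the interpolation, and the natural template is the proof of the $l=0$ case following Araujo and Lehmann. The guiding principle is that an $l$-th coextremal ray of $(X,\Delta+H)$ should, after passing to a suitable small $\Q$-factorial modification $\varphi\colon X\dashrightarrow X'$ and then running a $(K_{X'}+\Delta')$-MMP with scaling, become a genuine $(K+\Delta)$-negative extremal ray of a Mori cone whose associated contraction has exceptional or fibre locus of codimension $\le l$; granting this, parts (1)--(3) reduce to the classical Cone Theorem on that model together with the finiteness of minimal models (BCHM).

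First I would dispose of the easy containments. Since $H$ is ample, $H\cdot v>0$ for every nonzero $v\in\overline{\NE}(X)$, so every $(K_X+\Delta+H)$-negative class is bounded away from $\overline{\NE}(X)_{K_X+\Delta\ge0}$; hence each ray of $\Sigma^l(X,\Delta+H)$ actually lies in $\overline{\bNM}^l(X)$, and because $\overline{\NE}^l(X,\Delta+H)\supseteq\overline{\NE}^l(X,\Delta)$ while such a ray lies in the smaller cone, $\Sigma^l(X,\Delta+H)\subseteq\Sigma^l(X,\Delta)$ is then purely formal. It thus suffices to analyse the $(K_X+\Delta+H)$-negative extremal rays of $\overline{\bNM}^l(X)$. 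Using the description $\overline{\bNM}^l(X)=\overline{\sum_{\varphi}\overline{\NM}^l(X,X')}$ together with the inclusions $\overline{\NM}^l(X,X')\subseteq\overline{\Amp}_l(X)^\vee$, and imitating the separating-hyperplane argument from the $l=0$ case, I would show that such an extremal ray is not a genuine limit: a $(K+\Delta)$-negative extremal ray $R$ of $\overline{\bNM}^l(X)$ is spanned by the numerical class of a curve $C$ moving in a subvariety of codimension $\le l$ on a single small $\Q$-factorial modification $X'$ of $X$. The point is that, in the $(K+\Delta)$-negative region, the a priori infinite family of summands $\overline{\NM}^l(X,X')$ can only accumulate in a way controlled by the local finiteness already guaranteed by the Cone Theorem on each $X'$.

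Next, for part (2): starting from such a pair $(X',C)$ with $(K_{X'}+\Delta')\cdot C<0$, I would run a $(K_{X'}+\Delta')$-MMP with scaling of (the pull-back of) an ample divisor, arranged so that the numerical class of the deforming curve keeps codimension $\le l$ along every step; by BCHM this terminates, and there are only finitely many models reached across all $(K_X+\Delta)$-MMP's with scaling of a fixed $H$. It terminates on a model $X''$ where $[C]$ spans a $(K_{X''}+\Delta'')$-negative extremal ray of $\overline{\NE}(X'')$ whose contraction $X''\to Z$ has exceptional or fibre locus of codimension $\le l$; by the Cone Theorem this ray equals $\R_{\ge0}[C'']$ for a rational curve $C''$ on $X''$, and its numerical pull-back to $N_1(X)$ is the corresponding ray of $\Sigma^l(X,\Delta+H)$, which gives (3). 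Finiteness of $\Sigma^l(X,\Delta+H)$ follows since these rays are indexed by the finitely many contractions occurring in the finitely many such MMP runs, and rational polyhedrality of the $(K_X+\Delta+H)$-negative part of $\overline{\NE}^l(X,\Delta+H)$ follows from the corresponding statement of the Cone Theorem on each model. Finally, for (1) I would choose ample divisors $H_n\to0$ and set $\Sigma^l(X,\Delta)=\bigcup_n\Sigma^l(X,\Delta+H_n)$, which is countable, with the decomposition deduced exactly as in the passage from the polyhedral to the countable form of Theorem~\ref{thrm-Cone theorem}.

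The main obstacle is the codimension bookkeeping: one must choose the MMP so that the subvariety swept out by the deforming curve retains codimension $\le l$ through flips and divisorial contractions, interpolating between the extreme behaviours ``sweeps out all of $X$'' at $l=0$ and ``arbitrary curve'' at $l=d-1$. Closely tied to this is the claim that a $(K+\Delta)$-negative extremal ray of the infinite closure-of-sum cone $\overline{\bNM}^l(X)$ is genuinely attained on one birational model rather than only approximated across infinitely many of them; establishing this requires combining the finiteness of minimal models with the Cone Theorem on the individual models, and is where most of the work will go.
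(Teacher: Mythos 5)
Your high-level picture — coextremal rays become $(K+\Delta)$-negative extremal rays on a model reached by MMP with scaling, and finiteness of models plus the Cone Theorem on the model supplies discreteness — is the right intuition and indeed matches STEP4 (part (3)) of the paper. But the mechanism you propose for the hard part is not the paper's, and as written leaves the central step unproved.

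The concrete gap is exactly the thing you flag as ``the main obstacle'': the codimension bookkeeping. You want to run the MMP while arranging that ``the numerical class of the deforming curve keeps codimension $\le l$ along every step,'' and to argue that an extremal ray of the closure $\overline{\bNM}^l(X)=\overline{\sum_\varphi\overline{\NM}^l(X,X')}$ is genuinely attained on a single model. Neither of these is established in the proposal, and neither appears in the paper. Tracking the codimension of the locus swept by a family of curves through flips and divisorial contractions is delicate and has no obvious control mechanism. The paper sidesteps the curve side entirely: it invokes the duality $\overline{\NE}^l(X,\Delta)=\overline{\NE}(X)_{K_X+\Delta\ge0}+\overline{\Amp}_{d-l}(X)^{\vee}$ (Theorem~\ref{thrm-l-nef criterion}, citing \cite{C}), which converts all of the analysis into statements about divisor classes and the $l$-ample cone $\overline{\Amp}_{d-l}(X)$ in $N^1(X)$. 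The codimension bookkeeping is then replaced by Lemma~\ref{lem-boundary Amp_l}: a divisor $D$ on $\partial\overline{\Amp}_{l}(X)$ has a component $V\subseteq\bp(D)$ of dimension $\ge l$ with $V\not\subseteq\bm(D)$, so $V$ survives (generically) under the relevant MMP, which is what lets one compare thresholds across models. The actual engine for discreteness and finiteness is the Generalized Rationality Theorem (Theorem~\ref{g-rationality thm}): the threshold $r_l(H';K_X+\Delta+A)$ into $\overline{\Amp}_l(X)$ is rational with denominator bounded only in terms of $(X,\Delta+A)$ and an open subcone $U$ of the ample cone. This is proved by chaining classical nef thresholds through a terminating MMP, using BCHM finiteness of models to keep the bound uniform. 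With it, Proposition~\ref{prop-main prop} runs a perturbation-and-dimension-reduction argument on supporting divisors $L\in(\R_{\ge0}[K_X+\Delta+H]+\Amp_1(X))\cap\partial\overline{\Amp}_{d-l}(X)$, precisely as in the classical Cone Theorem proof, to get finiteness of $\Sigma^l(X,\Delta+H)$. None of this machinery appears in your proposal; ``local finiteness already guaranteed by the Cone Theorem on each $X'$'' is doing the work without justification.

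There is also a smaller gap in your proof of (1). You define $\Sigma^l(X,\Delta)=\bigcup_n\Sigma^l(X,\Delta+H_n)$, but Proposition~\ref{prop-S^l_H=sum} shows this can fail to be an equality: some rays of $\Sigma^l(X,\Delta)$ lying on hyperplanes supporting both $\overline{\NE}(X)_{K_X+\Delta\ge0}$ and $\overline{\bNM}^l(X)$ may only be limits of rays in the union. The paper deduces (1) by intersecting the decompositions for $\overline{\NE}^l(X,\Delta+\eps H)$ over all $\eps$ and then invoking Proposition~\ref{prop-S^l_H=sum} to replace the closed sum $\overline{\sum R}$ by $\sum_{R\in\Sigma^l(X,\Delta)_{K_X+\Delta+H<0}}R$; this extra step cannot be skipped.
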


\definecolor{rvwvcq}{rgb}{0.08235294117647059,0.396078431372549,0.7529411764705882}
\begin{figure}
  \centering
\begin{tikzpicture}[scale=2.2][line cap=round,line join=round,>=triangle 45,x=1cm,y=1cm]
\draw [line width=0.4pt] (1.08827687792906,2.7286739705034733)-- (1.0282962900315282,1.5136102356402312);
\draw [line width=0.4pt] (1.0282962900315282,1.5136102356402312)-- (1.5324096447430025,1.0171932680235873);
\draw [line width=0.4pt] (1.5324096447430025,1.0171932680235873)-- (2.5791182896401095,1.1480318486357262);
\draw [line width=0.4pt] (2.5791182896401095,1.1480318486357262)-- (3.0801446261687824,1.8185550647415993);
\draw [line width=0.4pt] (3.0801446261687824,1.8185550647415993)-- (3.0118070589036967,2.6517477117812978);
\draw [line width=0.4pt] (3.0118070589036967,2.6517477117812978)-- (2.6674908546065352,3.0065773879653963);
\draw [line width=0.4pt] (2.6674908546065352,3.0065773879653963)-- (1.853419780664326,3.1773023636669087);
\draw [line width=0.4pt] (1.853419780664326,3.1773023636669087)-- (1.08827687792906,2.7286739705034733);
\draw [line width=0.4pt] (0.8302616423644287,3.0065773879653963)-- (2.6984894298863025,1.0195389807688446);



    \node (v1) at (1.2130095803074883,1.7252608807481036) {};
        \node (v2) at (1.4862313221740124,1.3442897195539345) {};
        \node (v3) at (1.8633542898207645,1.2365403002262907) {};
        \node (v4) at (2.3941405855461935,1.2481992148753074) {};
        \node (v5) at (2.7407424186315748,1.6059668807782121) {};
        \node (v6) at (2.798465321842812,2.094687461300025) {};
        \node (v7) at (2.609903838019436,2.517988751515768) {};
        \node (v8) at (2.0323256855807523,2.7700551832643514) {};
        \node (v9) at (2.609903838019436,2.517988751515768) {};
        \node (v10) at (2.0323256855807523,2.7700551832643514) {};
        \node (v11) at (2.007398523302976,2.776366667843522) {};
        \node (v12) at (1.9678545855637726,2.7838104761108013) {};
        \node (v13) at (1.9379020402511926,2.7878912102591444) {};
        \node (v14) at (1.9229370867685867,2.789475746229113) {};
        \node (v15) at (1.9099878979267313,2.7906897326830373) {};
        \node (v15) at (1.8980899846335881,2.791383495839636) {};
        \node (v16) at (1.8897547903613323,2.791499056985653) {};
        \node (v17) at (1.8802452298055947,2.791296725909999) {};
        \node (v18) at (1.4750575084633712,2.650093023923789) {};
        \node (v19) at (1.2481862049268824,2.3171044977653965) {};
        \draw[opacity=0.2,fill=gray] (v1.center)--(v2.center)--(v3.center)--(v4.center)--(v5.center)--(v6.center)--(v7.center)--(v8.center)--(v9.center)--(v10.center)--(v11.center)--(v12.center)--(v13.center)--(v14.center)--(v15.center)--(v16.center)--(v17.center)--(v18.center)--(v19.center)--(v1.center);
\draw [line width=0.4pt,color=rvwvcq] (1.2130095803074883,1.7252608807481036)-- (1.4862313221740124,1.3442897195539345);
\draw [line width=0.4pt,color=rvwvcq] (1.4862313221740124,1.3442897195539345)-- (1.8633542898207645,1.2365403002262907);
\draw [line width=0.4pt,color=rvwvcq] (1.8633542898207645,1.2365403002262907)-- (2.3941405855461935,1.2481992148753074);
\draw [line width=0.4pt,color=rvwvcq] (2.3941405855461935,1.2481992148753074)-- (2.7407424186315748,1.6059668807782121);
\draw [line width=0.4pt,color=rvwvcq] (2.7407424186315748,1.6059668807782121)-- (2.798465321842812,2.094687461300025);
\draw [line width=0.4pt,color=rvwvcq] (2.798465321842812,2.094687461300025)-- (2.609903838019436,2.517988751515768);
\draw [line width=0.4pt,color=rvwvcq] (2.609903838019436,2.517988751515768)-- (2.0323256855807523,2.7700551832643514);
\draw [line width=0.4pt,color=rvwvcq] (2.0323256855807523,2.7700551832643514)-- (2.007398523302976,2.776366667843522);
\draw [line width=0.4pt,color=rvwvcq] (2.007398523302976,2.776366667843522)-- (1.9678545855637726,2.7838104761108013);
\draw [line width=0.4pt,color=rvwvcq] (1.9678545855637726,2.7838104761108013)-- (1.9379020402511926,2.7878912102591444);
\draw [line width=0.4pt,color=rvwvcq] (1.9379020402511926,2.7878912102591444)-- (1.9229370867685867,2.789475746229113);
\draw [line width=0.4pt,color=rvwvcq] (1.9229370867685867,2.789475746229113)-- (1.9099878979267313,2.7906897326830373);
\draw [line width=0.4pt,color=rvwvcq] (1.9099878979267313,2.7906897326830373)-- (1.8980899846335881,2.791383495839636);
\draw [line width=0.4pt,color=rvwvcq] (1.8980899846335881,2.791383495839636)-- (1.8897547903613323,2.791499056985653);
\draw [line width=0.4pt,color=rvwvcq] (1.8897547903613323,2.791499056985653)-- (1.8802452298055947,2.791296725909999);
\draw [line width=0.4pt,color=rvwvcq] (1.8802452298055947,2.791296725909999)-- (1.4750575084633712,2.650093023923789);
\draw [line width=0.4pt,color=rvwvcq] (1.4750575084633712,2.650093023923789)-- (1.2481862049268824,2.3171044977653965);
\draw [line width=0.4pt,color=rvwvcq] (1.2481862049268824,2.3171044977653965)-- (1.2130095803074883,1.7252608807481036);
\draw [line width=0.4pt] (1.1221847236444418,3.272412544329517)-- (2.986487177342595,1.2791802308402918);
\draw [line width=0.4pt] (1.08827687792906,2.7286739705034733)-- (1.8802452298055947,2.791296725909999);
\draw [line width=0.4pt] (2.7407424186315748,1.6059668807782121)-- (2.5791182896401095,1.1480318486357262);
\draw (0.2824672100876302,3.2397169748420706) node[anchor=north west] {\tiny $K_{X}+\Delta=0$};
\draw (0.806696629203042,3.4600223967873494) node[anchor=north west] {\tiny $K_{X}+\Delta+H=0$};
\draw (0.806696629203042,3.6020223967873494) node[anchor=north west] {\tiny $K_{X}+\Delta<0$};
\draw (0.55,2.13) node[anchor=north west] {\tiny $\overline{\NE}(X)$};
\draw (1.3824,1.839) node[anchor=north west] {\tiny $\overline{\bNM}^l(X)$};

\begin{scriptsize}

\draw [fill=white] (2.7407424186315748,1.6059668807782121) circle (0.3pt);
\draw [fill=white] (2.798465321842812,2.094687461300025) circle (0.3pt);
\draw [fill=white] (2.609903838019436,2.517988751515768) circle (0.3pt);
\draw [fill=white] (2.0323256855807523,2.7700551832643514) circle (0.3pt);
\draw [fill=white] (2.007398523302976,2.776366667843522) circle (0.3pt);
\draw [fill=white] (1.9678545855637726,2.7838104761108013) circle (0.3pt);
\draw [fill=white] (1.9379020402511926,2.7878912102591444) circle (0.3pt);
\draw [fill=white] (1.9229370867685867,2.789475746229113) circle (0.3pt);
\draw [fill=white] (1.9099878979267313,2.7906897326830373) circle (0.3pt);
\draw [fill=white] (1.8980899846335881,2.791383495839636) circle (0.3pt);
\draw [fill=white] (1.8897547903613323,2.791499056985653) circle (0.3pt);
\draw [fill=white] (1.8802452298055947,2.791296725909999) circle (0.3pt);

\end{scriptsize}
\end{tikzpicture}
\begin{tikzpicture}[scale=2.2][line cap=round,line join=round,>=triangle 45,x=1cm,y=1cm]
\draw [line width=0.4pt] (1.08827687792906,2.7286739705034733)-- (1.0282962900315282,1.5136102356402312);
\draw [line width=0.4pt] (1.0282962900315282,1.5136102356402312)-- (1.5324096447430025,1.0171932680235873);
\draw [line width=0.4pt] (1.5324096447430025,1.0171932680235873)-- (2.5791182896401095,1.1480318486357262);
\draw [line width=0.4pt] (2.5791182896401095,1.1480318486357262)-- (3.0801446261687824,1.8185550647415993);
\draw [line width=0.4pt] (3.0801446261687824,1.8185550647415993)-- (3.0118070589036967,2.6517477117812978);
\draw [line width=0.4pt] (3.0118070589036967,2.6517477117812978)-- (2.6674908546065352,3.0065773879653963);
\draw [line width=0.4pt] (2.6674908546065352,3.0065773879653963)-- (1.853419780664326,3.1773023636669087);
\draw [line width=0.4pt] (1.853419780664326,3.1773023636669087)-- (1.08827687792906,2.7286739705034733);
\draw [line width=0.4pt] (0.8302616423644287,3.0065773879653963)-- (2.7176946558269597,1.0030773585339956);


    \node (v1) at (1.2130095803074883,1.7252608807481036) {};
        \node (v2) at (1.4862313221740124,1.3442897195539345) {};
        \node (v3) at (1.8633542898207645,1.2365403002262907) {};
        \node (v4) at (2.3941405855461935,1.2481992148753074) {};
        \node (v5) at (2.7407424186315748,1.6059668807782121) {};
        \node (v6) at (2.798465321842812,2.094687461300025) {};
        \node (v7) at (2.609903838019436,2.517988751515768) {};
        \node (v8) at (2.0323256855807523,2.7700551832643514) {};
        \node (v9) at (2.609903838019436,2.517988751515768) {};
        \node (v10) at (2.0323256855807523,2.7700551832643514) {};
        \node (v11) at (2.007398523302976,2.776366667843522) {};
        \node (v12) at (1.9678545855637726,2.7838104761108013) {};
        \node (v13) at (1.9379020402511926,2.7878912102591444) {};
        \node (v14) at (1.9229370867685867,2.789475746229113) {};
        \node (v15) at (1.9099878979267313,2.7906897326830373) {};
        \node (v15) at (1.8980899846335881,2.791383495839636) {};
        \node (v16) at (1.8897547903613323,2.791499056985653) {};
        \node (v17) at (1.8802452298055947,2.791296725909999) {};
        \node (v18) at (1.4750575084633712,2.650093023923789) {};
        \node (v19) at (1.2481862049268824,2.3171044977653965) {};
        \draw[opacity=0.2,fill=gray] (v1.center)--(v2.center)--(v3.center)--(v4.center)--(v5.center)--(v6.center)--(v7.center)--(v8.center)--(v9.center)--(v10.center)--(v11.center)--(v12.center)--(v13.center)--(v14.center)--(v15.center)--(v16.center)--(v17.center)--(v18.center)--(v19.center)--(v1.center);
\draw [line width=0.4pt,color=rvwvcq] (1.2130095803074883,1.7252608807481036)-- (1.4862313221740124,1.3442897195539345);
\draw [line width=0.4pt,color=rvwvcq] (1.4862313221740124,1.3442897195539345)-- (1.8633542898207645,1.2365403002262907);
\draw [line width=0.4pt,color=rvwvcq] (1.8633542898207645,1.2365403002262907)-- (2.3941405855461935,1.2481992148753074);
\draw [line width=0.4pt,color=rvwvcq] (2.3941405855461935,1.2481992148753074)-- (2.7407424186315748,1.6059668807782121);
\draw [line width=0.4pt,color=rvwvcq] (2.7407424186315748,1.6059668807782121)-- (2.798465321842812,2.094687461300025);
\draw [line width=0.4pt,color=rvwvcq] (2.798465321842812,2.094687461300025)-- (2.609903838019436,2.517988751515768);
\draw [line width=0.4pt,color=rvwvcq] (2.609903838019436,2.517988751515768)-- (2.0323256855807523,2.7700551832643514);
\draw [line width=0.4pt,color=rvwvcq] (2.0323256855807523,2.7700551832643514)-- (2.007398523302976,2.776366667843522);
\draw [line width=0.4pt,color=rvwvcq] (2.007398523302976,2.776366667843522)-- (1.9678545855637726,2.7838104761108013);
\draw [line width=0.4pt,color=rvwvcq] (1.9678545855637726,2.7838104761108013)-- (1.9379020402511926,2.7878912102591444);
\draw [line width=0.4pt,color=rvwvcq] (1.9379020402511926,2.7878912102591444)-- (1.9229370867685867,2.789475746229113);
\draw [line width=0.4pt,color=rvwvcq] (1.9229370867685867,2.789475746229113)-- (1.9099878979267313,2.7906897326830373);
\draw [line width=0.4pt,color=rvwvcq] (1.9099878979267313,2.7906897326830373)-- (1.8980899846335881,2.791383495839636);
\draw [line width=0.4pt,color=rvwvcq] (1.8980899846335881,2.791383495839636)-- (1.8897547903613323,2.791499056985653);
\draw [line width=0.4pt,color=rvwvcq] (1.8897547903613323,2.791499056985653)-- (1.8802452298055947,2.791296725909999);
\draw [line width=0.4pt,color=rvwvcq] (1.8802452298055947,2.791296725909999)-- (1.4750575084633712,2.650093023923789);
\draw [line width=0.4pt,color=rvwvcq] (1.4750575084633712,2.650093023923789)-- (1.2481862049268824,2.3171044977653965);
\draw [line width=0.4pt,color=rvwvcq] (1.2481862049268824,2.3171044977653965)-- (1.2130095803074883,1.7252608807481036);
\draw [line width=0.4pt] (1.1221847236444418,3.272412544329517)-- (2.986487177342595,1.2791802308402918);
\draw [line width=0.4pt] (2.798465321842812,2.094687461300025)-- (2.814507691136868,1.4630533295869779);
\draw [line width=0.4pt] (1.438621125646756,2.9340923000021446)-- (2.0323256855807523,2.7700551832643514);
\draw (0.2424672100876302,3.2297169748420706) node[anchor=north west] {\tiny $K_{X}+\Delta=0$};
\draw (0.806696629203042,3.5220223967873494) node[anchor=north west] {\tiny $K_{X}+\Delta+H=0$};
\draw (2.0824672100876302,3.4397169748420706) node[anchor=north west] {\tiny $K_{X}+\Delta+H<0$};
\draw (0.55,2.13) node[anchor=north west] {\tiny $\overline{\NE}(X)$};
\draw (1.3824,1.839) node[anchor=north west] {\tiny $\overline{\bNM}^l(X)$};
\begin{scriptsize}

\draw [fill=white] (2.798465321842812,2.094687461300025) circle (0.3pt);
\draw [fill=white] (2.609903838019436,2.517988751515768) circle (0.3pt);
\draw [fill=white] (2.0323256855807523,2.7700551832643514) circle (0.3pt);

\end{scriptsize}
\end{tikzpicture}
  \caption{}\label{fig1}
\end{figure}

See Section \ref{sec-prelim} for the definition of numerical pull-back of curves.
The statements (1) and (2) of Theorem \ref{main theorem} imply that the rays $R_i\in\Sigma^l(X,\Delta)$ can accumulate only toward the hyperplanes supporting simultaneously both cones $\overline{\NE}(X)_{K_X+\Delta\geq 0}$ and $\overline{\bNM}^l(X)$ and the rays $R_i$ are discrete away from these hyperplanes. See Figure \ref{fig1}.

\medskip

Due to the complication, we briefly outline here the strategy of the proof of our main result, Theorem \ref{main theorem}. The detailed proof is given in Section \ref{sect-proofs}.
As in Theorem \ref{main theorem}, let $X$ be a $\Q$-factorial normal projective variety and $(X,\Delta)$ be a klt pair.\\

\noindent\textbf{STEP1}: Let $H$ be any ample divisor on $X$.
In Proposition \ref{prop-main prop}, we first prove that the set $\Sigma^l(X,\Delta+H)$ of $l$-th coextremal rays of $(X,\Delta+H)$ is finite. This immediately implies (2) of Theorem \ref{main theorem}. In the proof of Proposition \ref{prop-main prop}, the finiteness of minimal models (Theorem \ref{thrm-finite models}) and generalized rationality theorem (Theorem \ref{g-rationality thm}) play crucial roles.

\bigskip

\noindent\textbf{STEP2}:
For any real number $\eps$ such that $0<\eps\leq 1$, it is easy to observe that $\Sigma^l(X,\Delta+\eps H)\supseteq\Sigma^l(X,\Delta+\eps H)_{K_X+\Delta+H<0}$.
Therefore, as a byproduct of \textbf{STEP1} (or Proposition \ref{prop-main prop}), we obtain that the set $\Sigma^l(X,\Delta+\eps H)_{K_X+\Delta+H<0}$ of $(K_X+\Delta+H)$-negative extremal rays of the cone $\overline{\NE}^l(X,\Delta+\eps H)$ is also finite.
This finiteness implies the following (Corollary \ref{cor-(1-e) finite}):

$$
\overline{\NE}^l(X,\Delta+\eps H)=\overline{\NE}^l(X,\Delta+\eps H)_{K_X+\Delta+H\geq 0}+\sum_{R_i\in \Sigma^l(X,\Delta+\eps H)_{K_X+\Delta+H<0}} R_i.
$$

\medskip

\noindent\textbf{STEP3}: The intersection of the above equality over all $\eps$ satisfying $0<\eps\leq 1$ gives
$$
\bigcap_{0<\eps\leq 1} \;\overline{\NE}^l(X,\Delta+\eps H)=\overline{\NE}^l(X,\Delta)_{K_X+\Delta+H\geq 0}+
\overline{\sum_{R_i\in\Sigma^l(X,\Delta)_{K_X+\Delta+H<0}} R_i}.
$$
Since the choice of the ample divisor $H$ was arbitrary and
$$
\overline{\NE}^l(X,\Delta)=\bigcap_{0<\eps\leq 1} \;\overline{\NE}^l(X,\Delta+\eps H),
$$
we finally obtain the statement (1) of Theorem \ref{main theorem}.

\medskip

\noindent\textbf{STEP4}:
For each ray $R_i\in\Sigma^l(X,\Delta+H)$, we can find an ample divisor $A_i$ on $X$ such that $R_i=\overline{\NE}^l(X,\Delta+H)\cap\{\eta\in N_1(X)|\eta\cdot (K_X+\Delta+H+A_i)=0\}$.
Since the bounding divisor $K_X+\Delta+H+A_i$ is pseudoeffective,
if we run the MMP on $(X,\Delta+H+A_i)$ with scaling of some large multiple of $A_i$, then we obtain a birational map $\varphi:X\dashrightarrow X'$ such that $\varphi_*(K_X+\Delta+H+A_i)=K_{X'}+\Delta'+H'+A'_i$ is nef.
The Cone Theorem implies that there exists a rational curve $C'$ on $X'$ which is trivial with respect to $K_{X'}+\Delta'+H'+A'_i$ and its numerical pull back to $N_1(X)$ spans the ray $R_i$.
Hence, we obtain (3) of Theorem \ref{main theorem}.

\medskip

We remark first of all that in Theorem \ref{thrm-batyrev} and in our result (Theorem \ref{main theorem}), the enlargement of $\overline{\bNM}^l(X)$ by the half cone $\overline{\NE}(X)_{K_X+\Delta\geq 0}$ is necessary.
Due to the enlargement, all the $(K_X+\Delta)$-negative extremal rays of $\overline{\NE}^l(X,\Delta)$ can be supported by the hyperplane of the form  $\{\eta|\eta\cdot(K_X+\Delta+H)=0\}$ for some ample divisor $H$. Thus we are able to run the MMP with scaling on $(X,\Delta+H)$ assuming that the pair is klt. Without such enlargement, some $(K_X+\Delta)$-negative extremal rays of $\overline{\bNM}^l(X)$ can be only supported by the divisor of the form $K_X+\Delta+B=0$ for a big divisor $B$. If this is the case, there is no guarantee that $(X,\Delta+B)$ is klt or that we can run the MMP on the pair.
Note also that the cone $\overline{\bNM}^l(X)$ for $l\geq 1$ can have circular boundary in the $(K_X+\Delta)$-negative half space in general. See \cite[Example 4.9]{Lehmann-cone} for such example.

Lastly, we also remark that a slightly stronger version of Theorem \ref{thrm-batyrev} was proved by Batyrev in \cite{Batyrev} for terminal threefolds $X$ with $\Delta=0$ using the boundedness of terminal Fano threefolds. See Remark \ref{rmk-Batyrev} for details.
The recent result on the conjecture of Borisov-Alexeev-Borisov (BAB) by Birkar (\cite{B1}, \cite{B2}) can be also applied to obtain a stronger structure theorem of $\overline{\NE}^0(X,\Delta)$ in higher dimensions. See \cite{Araujo} for details.
However, as of now, it is unclear what kind of boundedness condition is needed to improve our result to the stronger form for the cases $0< l <\dim X$.

In Section \ref{sec-prelim}, we first recall some basic fact that we need in the subsequent sections.
In Section \ref{sec-general rationality}, we give a generalization of the well known Rationality Theorem which will play crucial roles in the proof of our main result.
In Section \ref{sect-proofs}, after collecting some more ingredients we complete the proof of the main result (Theorem \ref{thrm-batyrev}).

\medskip

\section*{Aknowledgement}
The first author was partially supported by NRF-2016R1C1B2011446. Some parts of this paper were written during the first author's visits to University of Tokyo, Osaka University and IBS-Center for Geometry and Physics. He is grateful for their hospitality.  The second author was partially supported by JSPS KAKENHI $\#$15H03611, 16H02141, 17H02831, and 18H01108. He thanks Professors Tommaso de Fernex, Masayuki Kawakita, Janos Koll\'ar, and Keiji Oguiso  for comments, questions, and  discussion. He brushed up this paper when he stayed at MSRI, Johns Hopkins University,  National Taiwan University, and University of Utah. He is grateful to it for their hospitality.

\section{Preliminaries}\label{sec-prelim}

We work over an algebraically closed field of characteristic $0$, e.g., the field $\mathbb C$ of complex numbers.
By a variety $X$, we mean a normal projective variety and divisors are always $\mathbb R$-divisors unless otherwise stated. A \emph{pair} $(X,\Delta)$ consists of a variety $X$ and a divisor $\Delta$ on $X$ such that $K_X+\Delta$ is $\R$-Cartier.

\subsection{Minimal model program with scaling}

We use the standard terminologies in the minimal model program (MMP), for example, as in \cite{KM}.
Below, we will recall some of them.

\begin{definition}[Singularities of pairs]
For a given pair $(X,\Delta)$ with an effective divisor $\Delta$ on a variety $X$, let $f:Y\to X$ be a log resolution.
Then we can write
$$
K_Y=f^*(K_X+\Delta)+\sum a(E_i;X,\Delta)E_i
$$
for some prime divisors $E_i$ on $Y$.
The pair $(X,\Delta)$ is said to have
\begin{enumerate}
  \item \emph{kawamata log terminal} (klt) singularities if $a(E_i;X,\Delta)>-1$ for all $i$ and
  \item \emph{log canonical} (lc) singularities if $a(E_i;X,\Delta)\geq -1$ for all $i$.
\end{enumerate}
\end{definition}

\begin{definition}\label{def-min model}
Let $(X,\Delta)$ be a klt pair such that $K_X+\Delta$ is pseudoeffective.
Let $\varphi:X\dashrightarrow X'$ be a birational modification of $X$ and let $\Delta'=\varphi_*\Delta$.
The pair $(X',\Delta')$ is called a \emph{minimal model of} $(X,\Delta)$ if
\begin{enumerate}
\item $(X',\Delta')$ is a $\Q$-factorial klt pair,
\item $K_{X'}+\Delta'$ is nef, and
\item $\varphi$ is $(K_X+\Delta)$-negative, that is, the inequality $a(E_i;X,\Delta)\leq a(E_i;X',\Delta')$ holds for all prime divisors $E_i$ over $X$ and it is strict for any prime divisor $E_i$ on $X$ which is exceptional on $X'$.
\end{enumerate}
\end{definition}

Let $\Amp_1(X)$ be the ample cone and $\overline{\Amp}_1(X)$ the nef cone in $N^1(X)$.
Let $E,D$ be divisors on $X$ such that $E+rD$ is nef for some $r\geq0$.
The following threshold will be useful below:
$$
r_1=r_1(E;D):=\sup\{t\;|\;E+tD\in\overline{\Amp}_1(X)\}.
$$
(We will often call $r_1(E;D)$ the nef threshold.)

\bigskip

We explain how to run the minimal model program (with scaling) on a given $\Q$-factorial klt pair $(X,\Delta)$.
Let $H$ be a divisor on $X$ such that $(X,\Delta+H)$ is klt and $K_X+\Delta+H$ is nef.

To run the minimal model program on $(X,\Delta)$ with scaling of $H$, we find a sequence of birational modifications of $X$ as follows.
If $K_X+\Delta$ is nef, then $(X,\Delta)$ is already a minimal model and there is nothing further to do.
Thus we assume that $K_X+\Delta$ is not nef. Let $(X_0,\Delta_0)=(X,\Delta)$ and $H_0=H$.
Since $K_{X_0}+\Delta_0+H_0$ is nef, we have $t_0:=r_1(H_0;K_{X_0}+\Delta_0)\geq 1$.
Therefore, $(X_0,\Delta_0+\frac{1}{t_0}H_0)$ is klt and $K_{X_0}+\Delta_0+\frac{1}{t_0}H_0$ is nef.
Suppose that inductively for $i\geq 1$ we have obtained a $(K_{X_{i-1}}+\Delta_{i-1})$-negative birational map $\varphi_i:X_{i-1}\dashrightarrow X_i$ such that $(X_i,\Delta_i)$ is a $\Q$-factorial klt pair with $\Delta_i=\varphi_{i*}\Delta_{i-1}$. Suppose that we have also constructed a divisor $H_i$ on $X_i$ such that $t_i:=r_1(H_i;K_{X_i}+\Delta_i)\geq 1$.

If $K_{X_i}+\Delta_i$ is nef, then $(X_i,\Delta_i)$ is a minimal model.
Thus we assume that $K_{X_i}+\Delta_i$ is not nef.
By the Contraction Theorem (\cite[Theorem 3.7]{KM}), there exists a $(K_{X_i}+\Delta_i)$-negative extremal ray $R$ in $\overline{\NE}(X_i)$ which is trivial with respect to the nef divisor $K_{X_i}+\Delta_i+\frac{1}{t_i}H_i$. If the associated extremal contraction $f:X_i\to X_i'$ is not birational, then $f$ is called a \emph{Mori fibre space} and the MMP comes to an end. If $f$ is birational, then it is either divisorial or small. If $f$ is a divisorial contraction, then we let $\varphi_{i+1}:=f$, $(X_{i+1},\Delta_{i+1}):=(X'_i,f_{i*}\Delta_i)$ and $H_{i+1}:=f_*H_i$. If $f$ is small, we take a \emph{flip} $f^+:X_i\dashrightarrow X_i^+$ over $X'_i$ which is uniquely associated to $R$.
After taking this flip, we denote $\varphi_{i+1}=f^+$, $(X_{i+1},\Delta_{i+1})=(X_i^+,\varphi_{i*}\Delta_i)$ and $H_{i+1}:=\varphi_{(i+1)*}H_i$. In either case, $(X_{i+1},\Delta_{i+1})$ is a $\Q$-factorial klt pair and $t_{i+1}=r_1(H_{i+1};K_{X_{i+1}}+\Delta_{i+1})\geq 1$.

If $\varphi_i:X_{i-1}\dashrightarrow X_{i}$ are the birational maps obtained as above, their composition
\begin{equation}\label{mmp}
\xymatrix{
X=X_0 \ar@{-->}[r]^{\varphi_1} &X_1 \ar@{-->}[r]^{\varphi_2} & X_2\ar@{-->}[r]^{\varphi_3} &\cdots \ar@{-->}[r]^{\varphi_{i}}& X_i \ar@{-->}[r]^{\varphi_{i+1}}& X_{i+1}\ar@{-->}[r] &\cdots
}
\end{equation}
is called the \emph{MMP on $(X,\Delta)$ with scaling of $H$}.
The composition $\varphi_i\circ\varphi_{i-1}\circ\cdots\circ\varphi_1$ for any $i\geq 1$ is called a \emph{partial MMP}.
If, for some sufficiently large $i>0$, the pair $(X_i,\Delta_i)$ is a minimal model or a Mori fibre space of $(X,\Delta)$ so that the process stops, then we say that the MMP with scaling terminates.

\begin{remark}\label{rem-mmp}
By construction, it is easy to see that $\{t_i\}_{i\geq 0}$ is an increasing sequence $$t_0\leq t_1\leq t_2\leq \cdots.$$
Note also that the difference $t_{i+1}-t_i$ can be expressed as the nef threshold
$$
t_{i+1}-t_{i}=r_1(\widehat{H}_{i+1};K_{X_{i+1}}+\Delta_{i+1})
$$
where $\widehat{H}_{i+1}:=\varphi_{i+1*}(H_i+t_{i}(K_{X_i}+\Delta_i))$ is a nef divisor.
\end{remark}

\begin{theorem}[{\cite{bchm}}]\label{thrm-term'n MMP scaling}
Let $(X,\Delta)$ be a $\Q$-factorial klt pair with a big divisor $\Delta$ on $X$. Then the MMP on $(X,\Delta)$ with scaling terminates.
\end{theorem}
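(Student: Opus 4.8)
The plan is to reduce the statement to two of the principal results of \cite{bchm}: that a $\Q$-factorial klt pair with big boundary admits a minimal model when its log canonical class is pseudoeffective, and a Mori fibre space otherwise; and that, as the boundary varies in a rational polytope of big boundaries, there are only finitely many such outputs. First I would note that bigness is preserved along the construction: the pushforward of a big divisor is big, so every birational transform $\Delta_i$ of $\Delta$ is big, and hence every pair $(X_i,\Delta_i)$, and likewise $(X_i,\Delta_i+sH_i)$ for the relevant parameters $s<1$, is a $\Q$-factorial klt pair with big boundary; in particular every contraction and flip demanded by the construction exists by \cite{bchm}.

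Next I would exploit the standard description of the scaling construction recorded in Remark \ref{rem-mmp}. Writing $\lambda_i:=1/t_i$, so that $1\ge\lambda_0\ge\lambda_1\ge\cdots>0$, one checks that for $s$ in the interior of the interval $[\lambda_i,\lambda_{i-1}]$ the divisor $K_{X_i}+\Delta_i+sH_i$ is nef and the induced birational map $X\dashrightarrow X_i$ is $(K_X+\Delta+sH)$-negative, so that $(X_i,\Delta_i+sH_i)$ is a minimal model of $(X,\Delta+sH)$ in the sense of Definition \ref{def-min model}. Applying the finiteness of models \cite{bchm} over the finite-dimensional rational space of $\R$-divisors spanned by the components of $\Delta$ and $H$, the minimal model of $(X,\Delta+sH)$ takes only finitely many values as $s$ ranges over the relevant interval, which is therefore cut into finitely many maximal subintervals of constancy. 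Since each transition $X_{i-1}\dashrightarrow X_i$ is an honest divisorial contraction or flip, the model genuinely changes at each $\lambda_i$; hence the $\lambda_i$ lie among these finitely many wall points, so the decreasing sequence $(\lambda_i)$ takes only finitely many values and is eventually constant. Divisorial contractions strictly lower the Picard number and so occur only finitely often, which reduces the problem to excluding an infinite tail of flips along which $t_i$ is constant.

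I expect this last step to be the main obstacle, and more fundamentally the fact that the inputs used above are not independent: the existence of flips, special termination, the existence and finiteness of minimal models for big klt pairs, and the termination of the MMP with scaling are all established together by the intricate induction on $\dim X$ of \cite{bchm}, and none of them admits a short proof once the others are set aside. For this reason I would not attempt a self-contained argument here, but would record the statement as a consequence of \cite{bchm}, the discussion above only making explicit which of its results are used and why.
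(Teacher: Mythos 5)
The paper offers no proof of this statement: it is quoted verbatim from \cite{bchm} (Corollary 1.4.2 there), and you ultimately do the same. Your intermediate sketch is sound as far as it goes --- bigness persists along the run, for $s\in[\lambda_i,\lambda_{i-1})$ (with $\lambda_j=1/t_j$) the pair $(X_i,\Delta_i+sH_i)$ is a minimal model of $(X,\Delta+sH)$, and finiteness of models over the relevant polytope bounds the wall values --- and you correctly identify the genuine remaining difficulty, namely an infinite tail of flips at a constant scaling parameter, which is exactly the point at which one cannot cleanly disentangle existence of flips, finiteness of models, and termination of the scaling MMP from the global induction of \cite{bchm}; deferring to that reference, as the paper does, is the right call.
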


We can also easily deduce the existence of a minimal model if $K_X+\Delta$ is big.

\begin{proposition}\label{prop-+-eps MMP}
Let $(X,\Delta)$ be a $\Q$-factorial klt pair and $E$ an effective divisor on $X$ such that $(X,\Delta+E)$ is klt and $K_X+\Delta+E$ is nef.
Let $\varphi:X\dashrightarrow X'$ be the MMP on $(X,\Delta)$ with scaling of $E$, assuming that such MMP terminates.
Then the following hold.
\begin{enumerate}
\item For any sufficiently small $\eps>0$, $\varphi$ is also the MMP on $(X,\Delta+\eps E)$ with scaling of $(1-\eps)E$.

\item If $\Supp E\subseteq\Supp\Delta$, then $\varphi$ is a partial MMP on     $(X,\Delta-\eps E)$ with scaling of $(1+\eps)E$ for sufficiently small $\eps>0$.
\end{enumerate}
\end{proposition}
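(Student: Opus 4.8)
The plan is to run the MMP on $(X,\Delta)$ with scaling of $E$ and to check that the identical sequence of divisorial contractions, flips, and (if it occurs) terminal Mori fibre space contraction is a legitimate run of the MMP with scaling for the perturbed data. Since by hypothesis this MMP terminates, it has finitely many steps; write the birational part as $X=X_0\dashrightarrow X_1\dashrightarrow\cdots\dashrightarrow X_N=X'$, where $\varphi_{i+1}:X_i\dashrightarrow X_{i+1}$ contracts a $(K_{X_i}+\Delta_i)$-negative extremal ray $R_i$ that is trivial against the nef $\R$-divisor $D_i:=K_{X_i}+\Delta_i+\lambda_i E_i$. Here $\Delta_i,E_i$ are the push-forwards of $\Delta,E$ and $\lambda_i:=1/t_i$, so that $1\ge\lambda_0\ge\lambda_1\ge\cdots>0$ by Remark \ref{rem-mmp} and $t_i\ge1$; if the MMP ends with a Mori fibre space contraction from $X_N$, let $R_N,\lambda_N$ denote its ray and scaling constant, and let $\lambda>0$ be the smallest scaling constant occurring overall. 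Two elementary facts will be used throughout: (a) by definition of the nef threshold $r_1$, $\lambda_i$ equals $\inf\{w\ge0\mid K_{X_i}+\Delta_i+wE_i\ \text{is nef}\}$; and (b) since $R_i\cdot D_i=0$ and $R_i\cdot(K_{X_i}+\Delta_i)<0$, we have $R_i\cdot E_i>0$.

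For (1) put $\Delta':=\Delta+\eps E$ and $H':=(1-\eps)E$, and fix $\eps$ with $0<\eps<\lambda$ (an empty MMP being trivial). Then $K_X+\Delta'+H'=K_X+\Delta+E$ is nef and $(X,\Delta'+H')=(X,\Delta+E)$ is klt, so the MMP on $(X,\Delta')$ with scaling of $H'$ is defined; I claim it runs through exactly the same steps. The key rewriting is $D_i=K_{X_i}+\Delta'_i+\mu_i H'_i$ with $\mu_i:=(\lambda_i-\eps)/(1-\eps)\in(0,1]$. By (a), $\mu_i$ is precisely the nef threshold $1/r_1(H'_i;K_{X_i}+\Delta'_i)$ of the perturbed problem at stage $i$, so $D_i$ is the scaled nef divisor there, $r_1(H'_i;K_{X_i}+\Delta'_i)=1/\mu_i\ge1$, and $(X_i,\Delta'_i+\mu_i H'_i)=(X_i,\Delta_i+\lambda_i E_i)$ is klt. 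By (b), $R_i\cdot(K_{X_i}+\Delta'_i)=-\mu_i(1-\eps)\,R_i\cdot E_i<0$, so $R_i$ is a valid extremal ray to contract and the associated step is exactly $\varphi_{i+1}$. Finally $K_{X_N}+\Delta'_N=K_{X_N}+\Delta_N+\eps E_N$ is nef, since it lies on the segment between the nef divisors $K_{X_N}+\Delta_N$ (where the original MMP stopped) and $K_{X_N}+\Delta_N+\lambda_{N-1}E_N$ (the push-forward of the last scaled nef divisor, which stays nef as always in the MMP with scaling); hence the perturbed MMP terminates at $X_N$, and $\varphi$ is the MMP on $(X,\Delta+\eps E)$ with scaling of $(1-\eps)E$. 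The case of a terminal Mori fibre space is identical, with $R_N$ in place of the last contracted ray.

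For (2) put $\Delta'':=\Delta-\eps E$ and $H'':=(1+\eps)E$; again $K_X+\Delta''+H''=K_X+\Delta+E$ is nef and $(X,\Delta''+H'')=(X,\Delta+E)$ is klt. The hypothesis $\Supp E\subseteq\Supp\Delta$ makes $\Delta''$, and more generally each $\Delta_i-\eps E_i$, effective once $\eps$ is small — here one uses that $\Supp E_i\subseteq\Supp\Delta_i$ is preserved under push-forward and that there are only finitely many $X_i$ — so $(X_i,\Delta''_i)$ is klt. Now $D_i=K_{X_i}+\Delta''_i+\nu_i H''_i$ with $\nu_i:=(\lambda_i+\eps)/(1+\eps)\in(0,1]$, so by (a) again $\nu_i$ is the nef threshold of the perturbed problem, $D_i$ the scaled nef divisor, $r_1(H''_i;K_{X_i}+\Delta''_i)=1/\nu_i\ge1$, and by (b) $R_i\cdot(K_{X_i}+\Delta''_i)=-(\lambda_i+\eps)\,R_i\cdot E_i<0$; hence every $\varphi_{i+1}$ is again a valid step of the MMP on $(X,\Delta-\eps E)$ with scaling of $(1+\eps)E$. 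The difference with (1) is that $K_{X_N}+\Delta''_N=K_{X_N}+\Delta_N-\eps E_N$ need not be nef — the relevant threshold $\nu_N=\eps/(1+\eps)$ is positive — so the chain $\varphi_1,\dots,\varphi_N$ realises $\varphi$ only as a \emph{partial} MMP on $(X,\Delta-\eps E)$ with scaling of $(1+\eps)E$, as claimed.

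The only slightly delicate points are bookkeeping ones, and they are precisely where the quantifier ``for $\eps$ sufficiently small'' does its work: that the push-forward of each scaled nef divisor $D_i$ remains nef along the MMP with scaling (standard, as it is pulled back from the base of the contraction); that $\lambda_i$ is genuinely the infimum in (a), so that $\mu_i$ and $\nu_i$ are the correct nef thresholds of the perturbed problems rather than mere lower bounds; and, for (2), that effectivity of $\Delta_i-\eps E_i$ and the klt property survive along the whole finite chain for one uniform $\eps$. None of these is hard, but they are what the argument rests on.
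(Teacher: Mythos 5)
Your proof is correct and fills in, carefully and in full detail, exactly the verification that the paper dismisses as ``clear''; the two approaches are the same in spirit (the perturbed pair has the same scaled nef divisors $D_i$, with rescaled thresholds, so the same extremal rays may be contracted). The only point worth flagging is that ``the MMP'' is not canonical — at each step one may choose among several rays trivial against $D_i$ — so the content, which your argument does establish, is that the given run $\varphi$ is a legitimate run of the MMP (resp.\ a legitimate partial run in (2)) for the perturbed data.
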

\begin{proof}
(1) It is clear that the MMP on $(X,\Delta+\eps E)$ with scaling of $(1-\eps)E$ can be obtained as a partial MMP on $(X,\Delta)$ with scaling of $E$.

(2) Clear by the definition of the MMP with scaling.
\end{proof}

We will need the following result on the finiteness of the minimal models.
\begin{theorem}\label{thrm-finite models}
Let $(X,\Delta)$ be a $\Q$-factorial klt pair and $A$ an effective ample $\Q$-divisor on $X$ such that $(X,\Delta+A)$ is klt.
For any effective ample divisor $H$ such that $(X,\Delta+A+H)$ is klt, there exists a convex cone in $\text{WDiv}(X)_{\R}$ whose relative interior $U$ consists of ample divisors including $H$ and such that there are finitely many isomorphism classes of minimal models $(X,\Delta')$ for all $\Delta'\in\mathfrak{D}$ where
$$
\mathfrak{D}=\{\Delta'=\Delta+A+H'| \text{ $H'\in U$ and $(X,\Delta')$ is klt}\}.
$$
We may assume that $\dim U=\dim N^1(X)$ and the image of $U$ in $N^1(X)$ is an open subcone of the ample cone $\Amp_1(X)$.
\end{theorem}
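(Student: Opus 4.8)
The plan is to deduce this from the finiteness of minimal models of \cite{bchm}. Recall that \cite{bchm} provides the following: for a $\Q$-factorial klt pair, a general ample $\Q$-divisor $A_0$, and a finite-dimensional rational affine subspace $V\subseteq\mathrm{WDiv}(X)_{\R}$, there are finitely many birational contractions $\varphi_i\colon X\dashrightarrow X_i$ ($i=1,\dots,k$) with the property that every minimal model of every pair $(X,A_0+B)$ with $B\in V$, $A_0+B\geq 0$ and $(X,A_0+B)$ klt is, compatibly with the map from $X$, isomorphic to $(X_i,\varphi_{i*}(A_0+B))$ for some $i$. (When $K_X+A_0+B$ is pseudoeffective such a pair even admits a minimal model in the strong sense, by Theorem \ref{thrm-term'n MMP scaling}, since $A_0+B$ is big.) So the task is to fit $\mathfrak D$, up to $\Q$-linear equivalence of its boundaries, into such a range, and then to read off $U$.

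First I would split off the ample part. Let $n=\dim N^1(X)$, let $D_1,\dots,D_n$ be general prime very ample divisors whose classes form a basis of $N^1(X)_{\R}$, and put $A_0=\sum_i\eps_iD_i$ for small rational $\eps_i>0$, so that $A_0$ is a general ample $\Q$-divisor. Since $A$ is ample we may choose a general effective $\Q$-divisor $C\geq 0$ with $A\sim_{\Q}A_0+C$; as the $\eps_i$ and the coefficients of $C$ are small and $D_i,C$ are general, one checks that $(X,\Delta+A_0+C+H)$ is klt. Replacing the ample summand $A$ by $A_0+C$ changes none of the minimal models in question, because $K_X+\Delta+A+H'\sim_{\Q}K_X+\Delta+A_0+C+H'$ for every $H'$, so the MMP run on these pairs is literally the same. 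Let $V$ be the rational finite-dimensional linear subspace of $\mathrm{WDiv}(X)_{\R}$ spanned by all prime components of $\Delta$, $C$, $A_0$ and $H$; then $D_1,\dots,D_n,H\in V$, so the natural class map $c_1\colon V\to N^1(X)_{\R}$ is surjective and $\Delta+C+H\in V$. Now apply the finiteness statement above to this $A_0$ and $V$.

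Next I would carve out $U$. After relabelling we may assume $[H],[D_2],\dots,[D_n]$ is a basis of $N^1(X)_{\R}$; let $W\subseteq V$ be the $n$-dimensional subspace spanned by $H,D_2,\dots,D_n$, so that $c_1|_W$ is an isomorphism onto $N^1(X)_{\R}$. Let $\mathcal C\subseteq W$ be a small full-dimensional closed convex cone with $H$ in its relative interior $U$. Every prime divisor occurring in $W$ already occurs in $\Delta+A_0+C+H$, with positive coefficient there except possibly along $\Supp H$, where the $cH$-contribution ($c\approx 1$) still keeps the coefficient positive; hence a small perturbation of $H$ inside $W$ is absorbed, and after shrinking $\mathcal C$ we may assume that for all $H'\in U$ the class of $H'$ is ample, $\Delta+A_0+C+H'$ is effective, and $(X,\Delta+A_0+C+H')$ is klt (a closed, resp.\ an open, condition, holding at $H'=H$). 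Thus $\Delta+A_0+C+H'=A_0+(\Delta+C+H')$ with $\Delta+C+H'\in V$ lies in the range to which \cite{bchm} was applied; since moreover $\Delta+A+H'\sim_{\Q}\Delta+A_0+C+H'$, every minimal model of every $(X,\Delta')$ with $\Delta'\in\mathfrak D$ is one of the finitely many $\varphi_i$, so only finitely many isomorphism classes occur. Finally $H\in U$ by construction, $\dim U=n=\dim N^1(X)$, and $c_1(U)=\operatorname{relint}c_1(\mathcal C)$ is an open subcone of $\Amp_1(X)$ containing $[H]$; this gives the cone together with the last two assertions.

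The geometric content is entirely \cite{bchm}; the only delicate point is the bookkeeping in the last paragraph. One must let $H'$ vary over a genuinely full-dimensional cone (so that the image $c_1(U)$ is open, not merely a lower-dimensional slice) while keeping $\Delta+A_0+C+H'$ an effective $\R$-divisor defining a klt pair, so that the finiteness of \cite{bchm} applies to it; this is exactly why $A_0$ is chosen with $n$ general components whose classes span $N^1(X)_{\R}$ and why the cone $\mathcal C$ is shrunk.
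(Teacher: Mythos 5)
Your proof follows essentially the same route as the paper's: pick a finite-dimensional rational affine subspace of $\text{WDiv}(X)_{\R}$ containing all boundaries $\Delta+A+H'$, invoke the finiteness of minimal models from \cite[Corollary~1.1.5]{bchm}, and then take $U$ to be a small full-dimensional cone of ample divisors around $H$ whose classes span $N^1(X)$. You are slightly more careful than the paper about the ``general'' ample $\Q$-divisor hypothesis in \cite{bchm} (replacing $A$ by $A_0+C$ with $A_0$ a general ample $\Q$-divisor and $C$ a general small effective divisor), whereas the paper simply arranges that $A$ shares no components with $\Delta$ and the $H_j$; both are standard reductions, and your argument is correct.
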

\begin{proof}
Let $\Delta=\sum_{i=1}^m d_i\Delta_i$ where $\Delta_i$ are prime divisors. Since $H$ is ample, we can take a set of effective ample $\Z$-divisors $\{H_1,H_2,\cdots,H_{\rho}\}$ such that $H$ is contained in the cone $U=\oplus_{j=1}^{\rho}\R_{>0} H_j$. Since $A$ is ample, we may also assume that $A$ has no common prime components with the divisors $\Delta_1,\Delta_2,\cdots,\Delta_m, H_1,\cdots,H_\rho$.

It is enough to prove that this open cone $U$ satisfies the required property. Consider the finite dimensional vector space $\mathcal L=\{\sum_{i=1}^m a_i\Delta_i+\sum_{j=1}^\rho b_jH_j\;|\;a_i,b_j\in\R\}$ and the affine space $V=A+\mathcal L$.
Then by \cite[Corollary 1.1.5]{bchm}, there are finitely many isomorphism classes of minimal models of the pairs $(X,\Delta')$ for all $\Delta'\in\mathfrak{D}'$ where
$$
\mathfrak{D}'=\{\Delta'\in V\;|\; (X,\Delta') \text{ is klt }\}.
$$
Since we have $\mathfrak{D}\subseteq\mathfrak{D}'$ by construction, the statement follows.

By taking the effective ample $\Z$-divisors $H_j$ such that $\{H_1,H_2,\cdots,H_{\rho}\}$ forms a basis of $N^1(X)$, we may assume that $U$ considered as a cone in $N^1(X)$ is an open subcone of the ample cone $\Amp_1(X)$.
\end{proof}

\subsection{$N^1(X)_{\R}$}\hfill

For a divisor $D$ on $X$, let $\bp(D)$ be the \emph{augmented base locus} which is  defined as the stable base locus $\text{SB}(D-A)$ for some sufficiently small ample divisor $A$ such that $D-A$ is a $\Q$-divisor.
Similarly, the \emph{diminished base locus} $\bm(D)$ is defined as the union $\cup_{A}\text{SB}(D+A)$ for all ample divisors $A$ such that $D+A$ are $\Q$-divisors.
By construction, $\bp(D)$ is always a Zariski closed subset of $X$ while $\bm(D)$ can be non-Zariski closed \cite{J on B-}.
It is also well known that both $\bp(D)$ and $\bm(D)$ depend only on the numerical class $\eta=[D]$ in $N^1(X)_{\R}$.

\begin{definition}
Let $l$ be an integer such that $1\leq l\leq d=\dim X$.
We define the \emph{$l$-ample cone} $\Amp_l(X)$ in $N^1(X)$ as
$$
\Amp_l(X):=\{\eta\in N^1(X) \;|\dim\bp(\eta)< l\}.
$$
\end{definition}

It is clear that the $1$-ample cone $\Amp_1(X)$ is the ample cone $\Amp_1(X)$ and
the $d$-ample cone $\Amp_d(X)$ is the big cone which is denoted by $\Bigdiv(X)$.
It is also easy to see that by Lemma \ref{lem-property B-B+} (1), the $l$-ample cone $\Amp_l(X)$ is an open convex cone and satisfies $\Amp_l(X)\subseteq\Amp_{l'}(X)$ for $l\leq l'$.

Let $H$ be a divisor such that $H\in\overline{\Amp}_i(X)$ for some integer $i$ such that $1\leq i\leq d$.
For any divisor $D$ on $X$, we define the following threshold
$$
r_i(H;D):=\sup\{t\in \mathbb R_{\geq0}\;|\; H+tD \in \overline{\Amp}_{i}(X)\}.
$$
Unless $D\in\overline{\Amp}_i(X)$, we have $r_i(D;H)<+\infty$.
It is easy to see that $0\leq r_{i}(H;D)\leq r_{i+1}(H;D)$ since $\overline{\Amp}_{i}(X)\subseteq\overline{\Amp}_{i+1}(X)$.
The following statements are rather obvious, and we will use them repeatedly.

\begin{lemma}\label{lem-not nef}
For a fixed integer $i$ such that $1\leq i\leq d=\dim X$, the following hold:
\begin{enumerate}
\item Let $D$ be a nef divisor and $D'$ be any divisor on $X$. Then $r_1(D;D')\leq r_i(D;D')$ and the equality holds if and only if $D+r_i(D;D')D'$ is nef.
\item Let $H\in\overline{\Amp}_i(X)$ and $F\not\in\overline{\Amp}_i(X)$.
Then $H+rF$ is not nef for any $r>r_{i}(H;F)$.
\end{enumerate}
\end{lemma}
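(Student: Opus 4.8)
The plan is to deduce both statements from a single elementary observation about the nested closed convex cones $\overline{\Amp}_i(X)$, using that $\overline{\Amp}_1(X)$ is the nef cone and that $\overline{\Amp}_1(X)\subseteq\overline{\Amp}_i(X)$ for every $i$ (the inclusion follows from $\Amp_1(X)\subseteq\Amp_i(X)$ by passing to closures). First I would record the following. For divisors $H,F$ with $H\in\overline{\Amp}_i(X)$, set $S_i(H,F):=\{t\in\R_{\geq0}\mid H+tF\in\overline{\Amp}_i(X)\}$. Since $\overline{\Amp}_i(X)$ is a closed convex cone (being the closure of the open convex cone $\Amp_i(X)$) containing $H$, the set $S_i(H,F)$ is a closed subinterval of $[0,\infty)$ containing $0$; and since $\overline{\Amp}_i(X)$ is a cone, $S_i(H,F)=[0,\infty)$ exactly when $F\in\overline{\Amp}_i(X)$, while otherwise $S_i(H,F)=[0,r_i(H;F)]$ with $r_i(H;F)<\infty$ and attained. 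The same holds for $i=1$, and $\overline{\Amp}_1(X)\subseteq\overline{\Amp}_i(X)$ yields $S_1(H,F)\subseteq S_i(H,F)$.

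For (1), I would apply this with $H=D$ (nef, hence in $\overline{\Amp}_1(X)$) and $F=D'$; the inclusion $S_1(D,D')\subseteq S_i(D,D')$ gives $r_1(D;D')\leq r_i(D;D')$ immediately. For the equivalence, assume $r_i(D;D')<\infty$ (if $D'\in\overline{\Amp}_i(X)$ then $r_i=\infty$ and the statement degenerates harmlessly). If $r_1(D;D')=r_i(D;D')=:r$, then $r=\sup S_1(D,D')$ together with closedness of $S_1(D,D')$ forces $r\in S_1(D,D')$, i.e. $D+rD'$ is nef. Conversely, if $D+r_i(D;D')D'$ is nef then $r_i(D;D')\in S_1(D,D')$, so $r_1(D;D')\geq r_i(D;D')$, whence equality.

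For (2), the hypotheses $H\in\overline{\Amp}_i(X)$ and $F\notin\overline{\Amp}_i(X)$ put us in the case $S_i(H,F)=[0,r_i(H;F)]$ with $r_i(H;F)<\infty$. Hence for every $r>r_i(H;F)$ we have $r\notin S_i(H,F)$, i.e. $H+rF\notin\overline{\Amp}_i(X)$, and therefore $H+rF\notin\overline{\Amp}_1(X)$ because $\overline{\Amp}_1(X)\subseteq\overline{\Amp}_i(X)$; that is, $H+rF$ is not nef.

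I do not expect a genuine obstacle, since the lemma is essentially bookkeeping about the chain $\overline{\Amp}_1(X)\subseteq\cdots\subseteq\overline{\Amp}_d(X)$. The only points deserving care are that closedness of $\overline{\Amp}_i(X)$ is exactly what makes the finite threshold $r_i(H;F)$ attained (so that $H+r_i(H;F)F$ is itself a point of the cone), and that the degenerate case $F\in\overline{\Amp}_i(X)$, where $r_i(H;F)=\infty$, should be kept separate in (1).
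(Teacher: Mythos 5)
Your proof is correct and takes the same route as the paper, which disposes of (1) with ``Obvious'' and of (2) with the same one-line observation that $H+rF\notin\overline{\Amp}_i(X)\supseteq\overline{\Amp}_1(X)=\Nef(X)$ for $r>r_i(H;F)$. Your account is merely a more careful spelling-out of the bookkeeping (the auxiliary interval $S_i(H,F)$ and its closedness), including the attention you rightly pay to the degenerate case $r_i=\infty$ in (1).
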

\begin{proof}
$(1)$ Obvious.

$(2)$ By definition, we have $H+rF\not\in\overline{\Amp}_i(X)$ for any $r>r_i(H;F)$.
Since $\Nef(X)=\overline{\Amp}_1(X)\subseteq\overline{\Amp}_i(X)$, clearly $H+rF$ is not nef.
\end{proof}

\begin{lemma}\label{lem-property B-B+}
Let $D,D'$ be arbitrary divisors on $X$ (that are not necessarily effective).
\begin{enumerate}
  \item $\bp(D)\supseteq\bp(D+\eps D')$ for any sufficiently small $\eps>0$.
  \item Suppose that $H$ is a nef divisor.
  Then $\bp(D+H)\subseteq\bp(D)$ and $\bp(D)\subseteq\bp(D-\eps H)$ for any sufficiently small $\eps>0$.
\end{enumerate}
\end{lemma}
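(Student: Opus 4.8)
The plan is to deduce all three inclusions from a single mechanism, using only two elementary properties of stable base loci of $\Q$-divisors: that $\text{SB}(M+P)\subseteq\text{SB}(M)$ whenever $P$ is an ample $\Q$-divisor (this follows from $\text{SB}(M+P)\subseteq\text{SB}(M)\cup\text{SB}(P)$ together with $\text{SB}(P)=\emptyset$), and the defining property of the augmented base locus, namely $\bp(E)=\text{SB}(E-B)$ for every sufficiently small ample $\Q$-divisor $B$ with $E-B$ a $\Q$-divisor.

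The mechanism I would isolate is the following: if $M$ is a divisor such that $A+M$ is ample, where $A$ is a sufficiently small ample $\Q$-divisor with $\bp(D)=\text{SB}(D-A)$ and $D-A$ a $\Q$-divisor, then $\bp(D+M)\subseteq\bp(D)$. To prove it I would first pick a sufficiently small ample $\Q$-divisor $B$ with $\bp(D+M)=\text{SB}(D+M-B)$ and $D+M-B$ a $\Q$-divisor; shrinking $B$ and using the density of $\Q$-divisors, I may in addition keep $A+M-B$ ample. Then $D+M-B=(D-A)+(A+M-B)$, where $A+M-B$ is ample and is a $\Q$-divisor (being the difference of the $\Q$-divisors $D+M-B$ and $D-A$), so the first property above gives $\text{SB}(D+M-B)\subseteq\text{SB}(D-A)$, that is, $\bp(D+M)\subseteq\bp(D)$.

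Granting this, the three assertions are immediate. For (1) I take $M=\eps D'$: since the ample cone is open and $A$ is ample, $A+\eps D'$ is ample once $\eps>0$ is small enough. For the first inclusion of (2) I take $M=H$: as $H$ is nef and $A$ is ample, $A+H$ is ample, so here no smallness hypothesis is needed. For the second inclusion of (2) I apply the mechanism with $D$ replaced by $D-\eps H$ and $M=\eps H$, which is nef for every $\eps>0$; this yields $\bp(D)=\bp((D-\eps H)+\eps H)\subseteq\bp(D-\eps H)$, valid in fact for all $\eps>0$.

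The step I expect to require the most care is the rational bookkeeping inside the mechanism: $\bp$ depends only on the numerical class but $\text{SB}$ does not, so the auxiliary ample divisor $B$ must be chosen in the correct coset of $\text{WDiv}(X)_{\Q}$ so that $D+M-B$, and therefore also $A+M-B$, are honest $\Q$-divisors to which the elementary stable-base-locus estimates apply. This is possible precisely because $\Q$-divisors are dense while the conditions "$B$ ample and sufficiently small" and "$A+M-B$ ample" cut out a nonempty open set; once $B$ is fixed, the rest is formal.
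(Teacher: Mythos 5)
Your proof is correct, and it takes a more unified route than the paper's. The paper handles part (1) by citing \cite[Cor.~1.6]{elmnp-asymptotic inv of base} and then proves the two inclusions of (2) by separate ad hoc manipulations, both resting on the sub-additivity estimate $\bp(E+F)\subseteq\bp(E)\cup\bp(F)$ together with the invariance $\bp(D-B)=\bp(D)$ for $B$ a small ample. You instead isolate a single mechanism --- if $A$ is a small ample with $\bp(D)=\text{SB}(D-A)$ and $A+M$ is ample, then $\bp(D+M)\subseteq\bp(D)$ --- which you prove directly from the elementary fact $\text{SB}(N+P)\subseteq\text{SB}(N)$ for $P$ ample rational, and from which all three inclusions follow by appropriate choices of $M$ (namely $\eps D'$, $H$, and $\eps H$ applied to the base point $D-\eps H$). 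Your approach has two small advantages: it gives a self-contained proof of (1) rather than a citation, and it establishes the second inclusion of (2) for every $\eps>0$, not just sufficiently small $\eps$ as stated. You also correctly flag the one delicate point --- the auxiliary ample divisor $B$ must be chosen in the right coset modulo $\Q$ so that the intermediate divisors are genuine $\Q$-divisors --- and your observation that $A+M-B$ is automatically a $\Q$-divisor, being the difference $(D+M-B)-(D-A)$, closes that gap cleanly. One minor wording slip: your $A$ should be described as an ample $\R$-divisor making $D-A$ rational rather than as an ample $\Q$-divisor (for an irrational $D$ the two cannot both hold), but the rest of your paragraph makes clear you understand this and it does not affect the argument.
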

\begin{proof}
(1)  This is Corollary 1.6 of \cite{elmnp-asymptotic inv of base}.

\noindent (2) First, we prove $\bp(D+H)\subseteq\bp(D)$. We may assume that $D$ is big.
For a fixed ample divisor $A$ and a sufficiently small $\eps>0$, we have
$$
\bp(D+H)=\bp(D-\eps A+H+\eps A)\subseteq\bp(D-\eps A)\cup\bp(H+\eps A)=\bp(D-\eps A)
$$
where $\bp(H+\eps A)=\emptyset$ since $H+\eps A$ is ample.
Furthermore, since $\bp(D-\eps A)=\bp(D)$, we obtain the first inclusion.
To prove the second inclusion, let $\eps>0$ be sufficiently small.
If $0<\eps'\ll\eps$, then we have
\begin{align*}
  \bp(D-\eps H)&\supseteq\bp(D-\eps H-\eps' A) \\
   & =\bp(D-(\eps H+\eps' A))=\bp(D).
\end{align*}
\end{proof}

\begin{lemma}\label{lem-boundary Amp_l}
Let $D$ be a divisor such that $D\in\partial\overline{\Amp}_l(X)$ for an integer $l$ such that $1\leq l\leq \dim X$. Then there exists an irreducible component $V\subseteq\bp(D)$ of dimension $\geq l$ such that $V\not\subseteq\bm(D)$.
\end{lemma}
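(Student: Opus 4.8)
The strategy is to extract two complementary pieces of information from the hypothesis $D\in\partial\overline{\Amp}_l(X)$. Since $\Amp_l(X)$ is an open convex cone, it coincides with the interior of its closure, so the boundary point $D$ does \emph{not} lie in $\Amp_l(X)$; by the definition of the $l$-ample cone this says $\dim\bp(D)\geq l$, and hence the Zariski closed set $\bp(D)$ has an irreducible component $V$ with $\dim V\geq l$. On the other hand $D\in\overline{\Amp}_l(X)$, and I will use this to make $\bm(D)$ "$l$-thin", so thin that a genuinely $l$-dimensional $V$ cannot be contained in it.

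First I would record the only convex-geometric input needed. Fix an ample divisor $A_0$. Since $\Amp_1(X)\subseteq\Amp_l(X)$ and $\Amp_l(X)$ is an open convex cone, $A_0$ is an interior point of $\Amp_l(X)$; therefore the half-open segment joining $D\in\overline{\Amp}_l(X)$ to $A_0$ lies in $\Amp_l(X)$, and by homogeneity of the cone we get $D+\tfrac1nA_0\in\Amp_l(X)$ for every integer $n\geq1$. Equivalently, $\dim\bp(D+\tfrac1nA_0)<l$ for all $n\geq1$.

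The key step is the inclusion
$$\bm(D)\ \subseteq\ \bigcup_{n\geq1}\bp\Bigl(D+\tfrac1nA_0\Bigr).$$
To prove it, take an ample divisor $A$ with $D+A$ a $\Q$-divisor, as in the definition $\bm(D)=\bigcup_A\text{SB}(D+A)$, and choose $n$ so large that $A-\tfrac1nA_0$ is ample (possible because the ample cone is open). Then $\text{SB}(D+A)\subseteq\bp(D+A)$, since subtracting a small ample divisor only enlarges the stable base locus, and $\bp(D+A)=\bp\bigl((D+\tfrac1nA_0)+(A-\tfrac1nA_0)\bigr)\subseteq\bp(D+\tfrac1nA_0)$ by Lemma \ref{lem-property B-B+}(2) applied to the nef divisor $A-\tfrac1nA_0$. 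Taking the union over all such $A$ yields the claim.

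To finish, I would run a standard very-general-point argument. If $V$ were contained in $\bm(D)$, then by the inclusion above $V=\bigcup_{n\geq1}\bigl(V\cap\bp(D+\tfrac1nA_0)\bigr)$ would be a countable union of closed subsets, each \emph{proper} in $V$ because $\dim\bp(D+\tfrac1nA_0)<l\leq\dim V$. But $V$ is an irreducible variety of positive dimension over an algebraically closed field that we may take to be uncountable, for instance $\C$ (after a routine spreading-out replacing the ground field by $\C$, if it is not already uncountable), and such a variety is never a countable union of proper closed subvarieties. Hence $V\not\subseteq\bm(D)$, and this $V$ is the required component. The single point deserving real care is exactly this last one: a countable union of proper closed subvarieties can be Zariski dense, so one really must invoke uncountability of the ground field here; the rest is formal given Lemma \ref{lem-property B-B+} and the openness and convexity of $\Amp_l(X)$.
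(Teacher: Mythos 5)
Your proof is correct and follows essentially the same strategy as the paper's: produce $V$ from $D\notin\Amp_l(X)$, then contradict $V\subseteq\bm(D)$ by showing $V$ would have to sit inside some $\bp(D+A)$ with $D+A\in\Amp_l(X)$. Where you differ is in how you justify that passage. The paper's proof simply asserts ``if $V\subseteq\bm(D)$ then there exists an ample $A$ with $V\subseteq\bm(D+A)$'' and deduces $V\subseteq\bp(D+A)$, without comment. That assertion is exactly a claim that an irreducible closed $V$ contained in the countable increasing union $\bm(D)=\bigcup_n\bp(D+\tfrac1nA_0)$ must lie in one term of the union, which is the Baire-type argument you spell out, complete with the uncountability hypothesis on the base field (or the base change to $\C$ that removes it). So you are not taking a different route; you are filling in the one genuinely delicate step that the paper compresses into a single sentence, and you correctly identify it as the crux — a countable union of proper closed subvarieties can be dense, so uncountability of the ground field is really doing work here. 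The chain $\text{SB}(D+A)\subseteq\bp(D+A)\subseteq\bp(D+\tfrac1nA_0)$ via Lemma~\ref{lem-property B-B+}(2) is exactly the right mechanism, and your bookkeeping with $A-\tfrac1nA_0$ ample is fine.
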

\begin{proof}
Since $\Amp_l(X)$ is an open cone, we have $D\not\in\Amp_l(X)$ and there exists an irreducible component $V\subseteq\bp(D)$ such that $\dim V\geq l$.
If $V\subseteq\bm(D)$, then there exists an ample divisor $A$ such that $V\subseteq\bm(D+A)$. Thus $V\subseteq\bp(D+A)$. However, since $\Amp_1(X)\subseteq\Int\overline{\Amp}_l(X)$ and $A\in\Amp_1(X)$, we have $D+A\in\Amp_l(X)$. This is a contradiction since $V\subseteq\bp(D+A)$.
Therefore, we must have $V\not\subseteq\bm(D)$.
\end{proof}

\medskip

\subsection{$N_1(X)_{\R}$}\hfill

Suppose that we are given a birational map $\varphi:X\dashrightarrow X'$ between $\Q$-factorial normal projective varieties. Suppose also that $\varphi^{-1}$ does not contract any divisors.
Then the pull-back of divisors define a linear map $\varphi^*:N^1(X')\to N^1(X)$ such that the composition $\varphi_*\circ\varphi^*:N^1(X')\to N^1(X')$ with the obvious push-forward linear map $\varphi_*:N^1(X)\to N^1(X')$ is the identity.
For the case of curves, since we cannot take the pull-back of curves freely, the similar pull-back procedure for divisors just explained is not possible. We can only define them as a dual linear map of $\varphi_*$ as follows.

\begin{definition}\label{def-num pullback}
Under the same notations as above, let $\varphi_*:N^1(X)\to N^1(X')$ be the linear map defined by the push-forward of divisors. Then the \textit{numerical pull-back of curves} $\varphi_1^*:N_1(X')\to N_1(X)$ is defined as the dual linear map of $\varphi_*:N^1(X)\to N^1(X')$.
\end{definition}

Note that for a given class $\eta$ of $N_1(X)$, it is unclear in general how to find a curve on $X$  representing the class $\eta$.  Even if we are able to do so, we find that such representation is rather unexpected. For example, the numerical pull-back of an effective integral curve need not be integral nor effective. See Examples 4.2 and 4.3 in \cite{Araujo} for details. Nonetheless, an advantage of the numerical pull-back $\varphi_1^*$ is that at least in the situations we treat below, we can express a class $\eta\in N_1(X)$ in terms of curves on some $X'$ which is birational to $X$.

\medskip

We will need the following result.

\begin{theorem}\label{thrm-l-nef criterion}
Let $(X,\Delta)$ be a $\Q$-factorial klt pair. Assume that for an integer $l$ such that $0\leq l< d=\dim X$, we have $\overline{\NE}^{l}(X,\Delta)_{K_X+\Delta<0}\neq \emptyset$ (or equivalently, $\Sigma^l(X,\Delta)\neq\emptyset$).
Then $K_X+\Delta\not\in\overline{\Amp}_{d-l}(X)$.
\end{theorem}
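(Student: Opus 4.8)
The plan is to argue by contraposition: assuming $K_X+\Delta\in\overline{\Amp}_{d-l}(X)$, I want to show that every class in $\overline{\bNM}^l(X)$ pairs nonnegatively with $K_X+\Delta$, which forces $\overline{\NE}^l(X,\Delta)_{K_X+\Delta<0}=\emptyset$. The key geometric input is a duality between the cone $\overline{\bNM}^l(X)$ of curves movable in codimension $\le l$ and the closure $\overline{\Amp}_{d-l}(X)$ of the $(d-l)$-ample cone of divisors; more precisely, the relevant statement is that if a divisor $D$ lies in $\overline{\Amp}_{d-l}(X)$, then $D\cdot C\ge 0$ for every curve $C$ movable in codimension $\le l$ on $X$, and the same holds after passing to any small $\Q$-factorial modification $X'$ (note $N^1(X)\cong N^1(X')$, so $D$ determines a divisor $D'$ on $X'$ that still lies in $\overline{\Amp}_{d-l}(X')$, because the invariants $\bp$ depend only on the numerical class and are preserved under small modifications). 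This pairing statement is essentially the assertion $\overline{\NM}^l(X,X')\subseteq\overline{\Amp}_{d-l}(X)^\vee$ alluded to in the introduction (generalizing $\overline{\NM}^1 = \overline{\NE}$ versus $\overline{\Amp}_1 = \Nef$ via Kleiman).

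First I would reduce to the case of a single movable curve: it suffices to show $(K_X+\Delta)\cdot\varphi_1^*[C]\ge 0$ for any small $\Q$-factorial modification $\varphi:X\dashrightarrow X'$ and any curve $C$ on $X'$ that moves in a subvariety of codimension $\le l$ (or in a birational image of a codimension $\le l$ subvariety of $X$), since $\overline{\bNM}^l(X)$ is by definition the closed cone generated by all such classes, and $\overline{\NE}(X)_{K_X+\Delta\ge 0}$ is automatically in the $K_X+\Delta\ge 0$ halfspace. By definition of the numerical pull-back, $(K_X+\Delta)\cdot\varphi_1^*[C] = \varphi_*(K_X+\Delta)\cdot [C]$, and $\varphi_*(K_X+\Delta)$ is the divisor $D'$ on $X'$ with the same numerical class, which lies in $\overline{\Amp}_{d-l}(X')$ by the invariance of the augmented base locus dimension under small modifications (combined with Lemma \ref{lem-property B-B+} to handle the closure). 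So everything comes down to: a divisor in $\overline{\Amp}_{d-l}(X')$ pairs nonnegatively with a curve moving in codimension $\le l$.

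For that last point I would use a limiting argument. Write $D'$ as a limit of divisors $D'_\eps\in\Amp_{d-l}(X')$ (e.g. $D'_\eps = D' + \eps A$ for an ample $A$; here I use that $\Amp_1\subseteq\Amp_{d-l}$ so adding an ample perturbation lands in the open cone). For $D'_\eps\in\Amp_{d-l}(X')$, by definition $\dim\bp(D'_\eps) < d-l$, so the augmented base locus has codimension $> l$. A curve $C$ moving in a subvariety $V$ of codimension $\le l$ therefore has a deformation $C_t$ that is not contained in $\bp(D'_\eps)$ (since $V\not\subseteq\bp(D'_\eps)$ by the codimension count, so a general member of the covering family of $V$ avoids $\bp(D'_\eps)$); on such a curve $D'_\eps$ is nef along it in the sense that $D'_\eps\cdot C_t\ge 0$, because $D'_\eps$ restricted to a neighborhood where it is base-point-free (outside $\bp$, a small multiple of $D'_\eps - A$ is base-point-free) has nonnegative degree. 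Passing $\eps\to 0$ gives $D'\cdot C\ge 0$, and then taking the closed cone generated by all such classes gives $\overline{\bNM}^l(X)\subseteq\{K_X+\Delta\ge 0\}$, as desired.

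The main obstacle I anticipate is making the middle step — "a divisor in $\overline{\Amp}_{d-l}$ is nonnegative on curves moving in codimension $\le l$" — fully rigorous, specifically the claim that a general deformation $C_t$ of $C$ inside its codimension-$\le l$ covering family avoids the augmented base locus $\bp(D'_\eps)$ of dimension $< d-l$. This requires knowing that the subvariety $V$ swept out is covered by the deformations of $C$ (so that a general point of $V$, hence a general $C_t$, misses any fixed proper closed subset of complementary-large codimension), together with a clean statement that a divisor with $\dim\bp < l$ in the target-relevant sense restricts to something with nonnegative degree on curves through general points. I would expect to cite the relevant positivity statement from the literature on higher-codimension positivity of cycles (e.g. the $\Amp_l$–$\NM^l$ duality in the style of \cite{Lehmann-cone} or \cite{BDPP}) rather than reprove it, and to be careful that the small $\Q$-factorial modification does not disturb the numerical identifications.
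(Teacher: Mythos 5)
Your overall strategy is the same as the paper's: both proofs rest on the duality between $\overline{\bNM}^l(X)$ and $\overline{\Amp}_{d-l}(X)$. The paper's proof is a one-liner: it cites the equality $\overline{\NE}^l(X,\Delta)=\overline{\NE}(X)_{K_X+\Delta\geq 0}+\overline{\Amp}_{d-l}(X)^{\vee}$ from Choi's earlier work (reference [C, Theorem 4.3]) and then observes that $K_X+\Delta\in\overline{\Amp}_{d-l}(X)$ would force nonnegativity on the right-hand side. You make the useful observation that in fact only the \emph{inclusion} $\overline{\bNM}^l(X)\subseteq\overline{\Amp}_{d-l}(X)^{\vee}$ is needed, not the full equality, and you (implicitly) correct an index typo from the introduction, writing $\overline{\Amp}_{d-l}$ where the paper's prose had $\overline{\Amp}_l$.

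However, your elaborative sketch of that inclusion has a genuine flaw at the step where you transfer across the small $\Q$-factorial modification $\varphi:X\dashrightarrow X'$. You assert that $\varphi_*(K_X+\Delta)\in\overline{\Amp}_{d-l}(X')$ ``by the invariance of the augmented base locus dimension under small modifications,'' but $\bp$ (and hence the cones $\Amp_k$) is \emph{not} preserved by small modifications. A flop takes an ample class, with $\bp=\emptyset$, to a class that is no longer nef, so $\bp$ jumps from empty to a nonempty set and the class leaves $\Amp_1$ entirely; the analogous jump can occur for any $\Amp_k$. Lemma \ref{lem-property B-B+} concerns perturbations by nef/ample divisors on a fixed variety and does not help here. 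So the statement ``$D\in\overline{\Amp}_{d-l}(X)\Rightarrow\varphi_*D\in\overline{\Amp}_{d-l}(X')$'' that drives your limiting argument on $X'$ is false as stated, and the argument would need to be rerun on a common resolution or by tracking the birational images of the codimension-$\leq l$ subvarieties directly, which is precisely what is nontrivial about the duality in [C]. Since you say in the end that you would cite this duality rather than reprove it, the proof stands logically, but the intermediate claim about small modifications should be excised or corrected.
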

\begin{proof}
By \cite[Theorem 4.3]{C}, we have $\overline{\NE}^l(X,\Delta)=\overline{\NE}(X)_{K_X+\Delta\geq 0}+\overline{\Amp}_{d-l}(X)^{\vee}$, where $\overline{\Amp}_{d-l}(X)^{\vee}\subseteq N_1(X)$ denotes the dual of the cone $\overline{\Amp}_{d-l}(X)$.
If $\overline{\NE}^l(X,\Delta)_{K_X+\Delta<0}\neq \emptyset$, then this in particular implies that $K_X+\Delta$ is negative on some part of the dual cone $\overline{\Amp}_{d-l}(X)^{\vee}$. Therefore, we have $K_X+\Delta\not\in\overline{\Amp}_{d-l}(X)$.
\end{proof}

If $(X,\Delta)$ is a klt pair and $R$ is a $(K_X+\Delta)$-negative extremal ray of $\overline{\NE}^l(X,\Delta)$, i.e., $R\in\Sigma^l(X,\Delta)$, then by the expression
\begin{equation}\label{expanded Mori cone}
\overline{\NE}^l(X,\Delta)=\overline{\NE}(X)_{K_X+\Delta\geq0}+\overline{\sum_{X\dashrightarrow X'}\overline{\NM}^l(X,X')}
\end{equation}
and the discreteness of the rays in $\Sigma^l(X,\Delta)$ (which we will prove below), $R$ can be considered as an extremal ray of $\overline{\NM}^l(X')$ for some small $\Q$-factorial modification $X\dashrightarrow X'$. If the ray $R$ as an extremal ray of $\overline{\NM}^l(X')$ is spanned by a curve $C'$ on $X'$, then the ray $R$ as an extremal ray in $\Sigma^l(X,\Delta)$ is spanned by the numerical pull-back $\varphi_1^*(C')$ of $C'$ on $X'$ by definition. Note also that the small $\Q$-factorial modifications $\varphi:X\dashrightarrow X'$ in the above expression can be realized as the MMP on $(X,\Delta)$ with scaling.

\section{Generalized Rationality Theorem}\label{sec-general rationality}
In this section, we prove a generalization of the Rationality Theorem (Theorem \ref{g-rationality thm}). We will present the result in the form which is suitable for our application using the finiteness of minimal models (Theorem \ref{thrm-finite models}).
Its immediate consequence is Proposition \ref{prop-main prop}, which is the key result used in STEP1 of the proof of Theorem \ref{main theorem} as in Introduction.

We first recall the following Rationality Theorem which played a crucial role in the proof of the Cone Theorem (Theorem \ref{thrm-Cone theorem}).

\begin{theorem}[{Rationality Theorem, \cite[Theorem 3.5]{KM}}]\label{thrm-rationality}
Let $(X,\Delta)$ be a $\Q$-factorial klt pair with an effective $\Q$-divisor $\Delta$ on $X$.
Suppose that $K_X+\Delta$ is not nef and let $H$ be a nef and big Cartier divisor.
Then there is a constant $M(X,\Delta)>0$ depending only on the pair $(X,\Delta)$ such that
$$r_1(H;K_X+\Delta):=\sup\{t\geq 0|H+t(K_X+\Delta)\in\overline{\Amp}_1(X)\}$$
 is a rational number of the form $\frac{\alpha}{\beta}$ for some positive integers $\alpha,\beta$ such that $0< \beta< M(X,\Delta)$.
\end{theorem}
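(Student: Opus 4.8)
The plan is to reduce the rationality of the nef threshold $r:=r_1(H;K_X+\Delta)$ to the behaviour of the Euler characteristic of the two-parameter family of round-ups $\lceil pH+q(K_X+\Delta)\rceil$, which one controls by Kawamata--Viehweg vanishing and asymptotic Riemann--Roch, and then to finish with an elementary lemma on lattice points lying in a thin strip along a line.

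\textbf{Reductions.} Set $r:=r_1(H;K_X+\Delta)$, which we may assume to be positive. Since $K_X+\Delta$ is not nef, $\tfrac1tH+(K_X+\Delta)$ is not nef for $t\gg0$, so $r<\infty$; as the nef cone is closed, $N:=H+r(K_X+\Delta)$ is nef but, by maximality of $r$, not ample, while for $0\le t<r$ the class $H+t(K_X+\Delta)=(1-\tfrac tr)H+\tfrac tr N$ is nef and, as $H$ is big, big. Fix an integer $a>0$ with $a(K_X+\Delta)$ Cartier. For $(p,q)\in\Z_{\ge0}^2$ put $L(p,q):=pH+q(K_X+\Delta)$ and $M(p,q):=pH+qK_X+\lceil q\Delta\rceil=\lceil L(p,q)\rceil$. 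The whole point is to understand $h^0(X,M(p,q))$ as $(p,q)$ crosses the threshold line $\{q=rp\}\subseteq\R^2$, a line through the origin that — once $r$ is irrational, or rational with large denominator — contains no nonzero lattice point.

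\textbf{Vanishing, polynomiality, and the arithmetic lemma.} If $q\ge1$ and $q-rp<1$, then $L(p,q)-(K_X+\Delta)=pH+(q-1)(K_X+\Delta)$ is nef and big, so — after passing to a log resolution and absorbing the fractional part $\lceil q\Delta\rceil-q\Delta$ (supported on $\operatorname{Supp}\Delta$) into a klt boundary — Kawamata--Viehweg vanishing gives $H^i(X,M(p,q))=0$ for $i>0$, whence $h^0(X,M(p,q))=\chi(X,M(p,q))$. By asymptotic Riemann--Roch, for each residue $b$ of $q$ modulo $a$ there is a polynomial $P_b$ of total degree $\le d=\dim X$ with $\chi(X,M(p,q))=P_b(p,q)$ whenever $q\equiv b\pmod a$; evaluating at a point safely below the threshold line (where $L(p,q)$ is nef and big, hence $h^0>0$) shows the relevant $P_b$ is not identically zero. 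The combinatorial heart is then the elementary lemma that \emph{a nonzero polynomial of degree $\le d$ cannot vanish at every lattice point in a thin strip $0<q-rp<\eps$ (for all $p\gg0$) unless $r$ is rational with denominator bounded in terms of $d$}; its proof is a finite-difference argument along the staircase of near-line points $(p,\lceil rp\rceil)$, and taking the residue restriction modulo $a$ into account converts the bound into a constant $M(X,\Delta)$ depending only on $d$ and $a$.

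\textbf{Closing the argument; the main obstacle.} Suppose for contradiction that $r$ is \emph{not} of the form $\alpha/\beta$ with $0<\beta<M(X,\Delta)$. For rational $t<r$, the divisor $H+t(K_X+\Delta)$ is nef and big, hence semiample by the base-point-free theorem; combining these contractions as $t\to r^-$ with the non-ampleness of $N$, one extracts curves on which $N$ — and therefore every $L(p,q)$ with $(p,q)$ on the threshold line — is numerically trivial, and uses them to show that $M(p,q)$ has no global sections for all lattice points $(p,q)$ in a sufficiently thin strip just above $\{q=rp\}$. Since such a strip lies inside the vanishing region $q-rp<1$, we get $P_b(p,q)=h^0(X,M(p,q))=0$ on all those lattice points, contradicting the arithmetic lemma unless $r$ is rational with denominator $<M(X,\Delta)$ — which is exactly the assertion. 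The genuinely delicate step is the vanishing of sections just \emph{above} the threshold line: a class that is barely non-nef need not be non-effective, so this really must exploit the base-point-free theorem (to manufacture curves annihilating $N$) together with a careful analysis of how $L(p,q)=pN+(q-rp)(K_X+\Delta)$ pairs against them; keeping the boundary klt after the round-up in the Kawamata--Viehweg step is the other, more routine, place that needs care.
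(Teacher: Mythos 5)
The paper does not supply a proof of this Rationality Theorem; it is stated with a citation to \cite[Theorem~3.5]{KM} and treated as a black box. So the relevant comparison is against the Koll\'ar--Mori argument, which is indeed the one your sketch is modelled on: Kawamata--Viehweg vanishing to identify $h^0$ with $\chi$ in a strip near the threshold line, asymptotic Riemann--Roch for polynomiality (separating residues of $q$ modulo the Cartier index $a$), and the lattice-point lemma on thin strips. That part of your outline is faithful to the standard proof, and the role of each ingredient is correctly identified.

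The genuine gap, which you yourself flag, is the closing step, and it is not merely ``delicate'' but in the form you propose it does not go through. Producing a curve $C$ with $N\cdot C=0$ and $(K_X+\Delta)\cdot C<0$ (which the base-point-free theorem does supply) shows that $M(p,q)=\lceil pH+q(K_X+\Delta)\rceil$ is \emph{negative} on $C$ for lattice points $(p,q)$ just above the threshold line, and hence that $C$ lies in the base locus of $|M(p,q)|$. But a divisor negative on a curve can still have many sections -- base locus does not imply $h^0=0$ -- so the step ``sections of $M(p,q)$ vanish on those curves, hence $h^0(X,M(p,q))=0$'' is not a proof. In the actual \cite{KM} argument one does not attempt to show $h^0=0$ above the line on $X$ directly; one passes to the contraction $f:X\to Z$ defined by $|mN|$, works relatively over $Z$ (restricting to a positive-dimensional fibre, or using the projection formula with the relative vanishing of Lemma~3.4 there), and it is on that smaller problem that the arithmetic lemma is applied. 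That descent is what makes the near-threshold Euler characteristic computable; it is not a refinement of your curve argument but a replacement for it. Two smaller issues to keep in view if you carry this out: the strip condition $q-rp<1$ needs $p>0$ (degenerate lattice points on the $q$-axis must be excluded), and the constant you get should be made explicit as $a(X)(\dim X+1)$ rather than an unspecified $M(X,\Delta)$, since the applications in Section~\ref{sec-general rationality} (Theorem~\ref{g-rationality thm}) need to know that the bound depends only on the pair and not on $H$.
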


We will need a generalization of this result in the proof of our main result, Theorem \ref{main theorem}.

For the cones $R$ and $V$ in $\R^\rho$, we denote
$R+V:=\{r+v|r\in R,\; v\in V\}$.

\begin{theorem}\label{g-rationality thm}Let $(X,\Delta+A)$ be a $\Q$-factorial klt pair with some effective $\Q$-divisor $\Delta$ and effective ample $\Q$-divisor $A$. Let $l$ be a positive integer. Assume that  $K_X+\Delta+A \not \in \overline{\mathrm{Amp}}_l(X)$. Then for a $\mathbb{Q}$-divisor $H \in (\mathbb{R}_{\geq 0} [K_X+\Delta+A]+ \mathrm{Amp}_1(X)) \cap \overline{\mathrm{Amp}}_l(X)$, there exists an open convex cone $U \subseteq  \mathrm{Amp}_1(X)$ such that $H \in (\mathbb{R}_{\geq 0} [K_X+\Delta+A]+ U) \cap \overline{\mathrm{Amp}}_l(X)$. Moreover for any Weil divisor $H' \in (\mathbb{R}_{\geq 0} [K_X+\Delta+A]+ U) \cap \overline{\mathrm{Amp}}_l(X)$, the threshold  $$r_l(H';K_X+\Delta+A)=\sup\{t| H'+t(K_X+\Delta+A) \in \overline{\mathrm{Amp}}_l(X)\}
$$ is a rational number whose positive denominator can be bounded from above by some constant $M(U,X,\Delta+A)$ depending only on the pair $(X,\Delta+A)$ and $U$.
\end{theorem}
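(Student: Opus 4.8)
The plan is to reduce the generalized rationality statement (Theorem~\ref{g-rationality thm}) to the classical Rationality Theorem (Theorem~\ref{thrm-rationality}) applied on a suitable minimal model, using the finiteness of minimal models (Theorem~\ref{thrm-finite models}) to obtain uniformity of the bound on the denominator over the open cone $U$. The key geometric input is that for a divisor of the form $H'+t(K_X+\Delta+A)$ sitting on the boundary $\partial\overline{\Amp}_l(X)$, the augmented base locus contains a component $V$ of dimension $\geq l$ with $V\not\subseteq\bm$ (Lemma~\ref{lem-boundary Amp_l}), and after running an appropriate MMP this component gets contracted so that the image of the divisor becomes nef; then $r_l$ on $X$ matches a genuine nef threshold $r_1$ on the model, to which Theorem~\ref{thrm-rationality} applies.

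First I would set up the cone $U$. Since $H \in (\R_{\geq 0}[K_X+\Delta+A] + \Amp_1(X))\cap\overline{\Amp}_l(X)$, write $H \equiv s(K_X+\Delta+A) + A_0$ for some $s\geq 0$ and ample $\Q$-divisor $A_0$; perturbing, we may take $A_0$ so that $(X,\Delta+A+A_0)$ is klt and $A_0$ shares no components with the fixed data. Apply Theorem~\ref{thrm-finite models} with boundary $\Delta+A$ and ample part (a small multiple of) $A_0$ to produce an open subcone $U\subseteq\Amp_1(X)$ containing $A_0$ (up to the scaling built into $s$) over which there are only finitely many isomorphism classes of minimal models $(X,\Delta')$ with $\Delta' = \Delta+A+H''$, $H''\in U$. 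Shrinking $U$, we arrange $H\in(\R_{\geq 0}[K_X+\Delta+A]+U)\cap\overline{\Amp}_l(X)$ as claimed. Now fix any Weil divisor $H'\in(\R_{\geq0}[K_X+\Delta+A]+U)\cap\overline{\Amp}_l(X)$ and set $r := r_l(H';K_X+\Delta+A)$. If $H'$ is already in $\Amp_l(X)$ or $K_X+\Delta+A\in\overline{\Amp}_l(X)$ there is nothing to prove (the latter is excluded by hypothesis, and in the former case we are reduced to analyzing the boundary point $H'+rD$), so assume $D_r := H'+r(K_X+\Delta+A)\in\partial\overline{\Amp}_l(X)$.

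Next, I would run a $(K_X+\Delta+A)$-MMP with scaling of a divisor in $U$ (this MMP terminates by Theorem~\ref{thrm-term'n MMP scaling} since $\Delta+A$ is big, $A$ being ample), obtaining a birational map $\psi:X\dashrightarrow X^\dagger$ on which $K_{X^\dagger}+\Delta^\dagger+A^\dagger$ is nef — more precisely, a model on which the relevant component $V\subseteq\bp(D_r)$ guaranteed by Lemma~\ref{lem-boundary Amp_l} has been removed from the augmented base locus, so that the pushforward $\psi_*D_r$ lies in $\overline{\Amp}_1(X^\dagger)=\Nef(X^\dagger)$. Because $V\not\subseteq\bm(D_r)$, the MMP does not need to extract it back, and one checks (using Lemma~\ref{lem-property B-B+} and the behavior of $\bp$ under the steps of the MMP) that $r_l(H';K_X+\Delta+A)$ computed on $X$ equals the ordinary nef threshold $r_1(\psi_*H';K_{X^\dagger}+\Delta^\dagger+A^\dagger)$ computed on $X^\dagger$. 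The classical Rationality Theorem on $(X^\dagger,\Delta^\dagger+A^\dagger)$ then yields that this threshold is rational with denominator bounded by $M(X^\dagger,\Delta^\dagger+A^\dagger)$; since only finitely many models $X^\dagger$ arise as $H'$ ranges over $(\R_{\geq0}[K_X+\Delta+A]+U)\cap\overline{\Amp}_l(X)$ — by the finiteness of minimal models over $U$ — we may take $M(U,X,\Delta+A)$ to be the maximum of these finitely many bounds. This gives the uniform rationality statement.

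**The hard part** will be the bookkeeping in the middle step: verifying precisely that the $l$-th nef threshold $r_l$ on $X$ is realized as an honest $r_1$ nef threshold on one of the finitely many models produced by Theorem~\ref{thrm-finite models}, and that the model depends only on $H'$ through which chamber of the finiteness decomposition it lies in (not on $H'$ individually). This requires matching the MMP-with-scaling directions to the ray $\R_{\geq0}[K_X+\Delta+A]+U$, controlling how $\bp(\,\cdot\,)$ changes under each divisorial contraction and flip so that the relevant higher-dimensional component of the augmented base locus is exactly what gets contracted, and invoking Lemma~\ref{lem-not nef}(1) to see that equality of $r_1$ and $r_l$ on the model is equivalent to the pushed-forward divisor being nef. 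Once that correspondence is nailed down, the rationality and the uniform denominator bound are immediate from Theorem~\ref{thrm-rationality} and the finiteness, respectively.
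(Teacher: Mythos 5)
Your approach is in the same spirit as the paper's (run an MMP with scaling toward the boundary point, invoke finiteness of minimal models for uniformity, detect the boundary via Lemma~\ref{lem-boundary Amp_l}), but the crucial middle step — precisely where you flag "the hard part" — contains a genuine gap, and the paper's proof has to work noticeably harder at exactly that point.

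The problem is your claim that $r_l(H';K_X+\Delta+A)$ equals the single nef threshold $r_1(\psi_*H';K_{X^\dagger}+\Delta^\dagger+A^\dagger)$ on the end model, followed by a direct application of Theorem~\ref{thrm-rationality}. Theorem~\ref{thrm-rationality} requires the first argument to be a \emph{nef and big} Cartier divisor, but $\psi_*H'$ is only a strict transform of $H'$ along the MMP and is generally \emph{not} nef on $X^\dagger$: nefness is destroyed at the first divisorial contraction or flip. What \emph{is} nef at step $i+1$ is the pushforward $\varphi_{i*}\bigl(H'_i + t_i(K_{X_i}+\Delta_i+A_i)\bigr)$ of the divisor supporting the contracted ray — not $H'_{i+1}$ itself. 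So the Rationality Theorem cannot be applied once to $(\psi_*H',K_{X^\dagger}+\Delta^\dagger+A^\dagger)$ to conclude. (There is also an ambiguity in which model $X^\dagger$ is meant: the minimal model where $K_{X^\dagger}+\Delta^\dagger+A^\dagger$ is nef and the model where $\psi_*D_r$ first becomes nef are different; the latter is where the MMP with scaling of $H'$ on $(X,\Delta+A+\frac{1}{r_l}H')$ terminates, and that is the one used in the paper.)

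The paper circumvents this by a telescoping induction that you would need to carry out to close the gap: write $r_l = t_0 + (t_1-t_0) + \cdots + (t_{m-1}-t_{m-2}) + (r_l - t_{m-1})$, where $t_i$ are the scaling thresholds of the MMP, observe (Remark~\ref{rem-mmp}) that each increment $t_{i+1}-t_i = r_1\bigl(D_{i+1};K_{X_{i+1}}+\Delta_{i+1}+A_{i+1}\bigr)$ is an honest nef threshold with a genuinely \emph{nef} first argument $D_{i+1}$, and bound the denominator of each increment by Theorem~\ref{thrm-rationality} on the intermediate model $X_{i+1}$, using Theorem~\ref{thrm-finite models} at \emph{each} step (not just at the end) to make the bound depend only on $U$ and $(X,\Delta+A)$. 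The final piece $r_l - t_{m-1}$ is handled by first proving $r_l \geq t_{m-1}$ via Lemma~\ref{lem-boundary Amp_l} (the component $V\not\subseteq\bm(D_r)$ survives the MMP, forcing the pushforward of $D_r$ to sit on $\partial\overline{\Amp}_1(X_m)$) and then applying Rationality once more to the nef divisor $t_{m-1}(K_{X_m}+\Delta_m+A_m)+H'_m$. Without this decomposition, there is no nef divisor to which the classical Rationality Theorem legitimately applies.
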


\begin{proof}
By the condition that $H \in (\mathbb{R}_{\geq 0} [K_X+\Delta+A]+ \mathrm{Amp}_1(X)) \cap \overline{\mathrm{Amp}}_l(X)$, there exists some $a\geq 0$ such that $G_0:=H-a(K_X+\Delta+A)$ is ample. Moreover by taking $s\gg 0$, we may assume that  $K_X+\Delta+A+sG_0$ is ample. For $G:=sG_0$,
there exists some open subcone $U\subseteq\Amp_1(X)$ which satisfies the statements of Theorem \ref{thrm-finite models}. Note that  these $G_0,s,G,U$ depend only on $H, X,\Delta+A$.
On the other hand, since $U$ is a convex cone and $H=a(K_X+\Delta+A)+\frac{1}{s}G$,
$$H \in \mathbb{R}_{\geq 0} [K_X+\Delta+A]+ U$$
holds. Moreover by shrinking $U$ if necessary, we may assume that for a  $\mathbb{Z}$-divisor $H'\in U$, we have some constant $k=k(U,X,\Delta+A)>0$ such that $K_X+\Delta+A+kH'$ is ample. This follows from the fact $X$ has only $\mathbb{Q}$-factorial rational singularities and from the length of extremal rays \cite{kawamata-length}. Here, $k$ dose not depend on $H'$. By the equality of $r_l(kH';K_X+\Delta+A )=kr_l(H';K_X+\Delta+A)$, we can replace $H'$ with $kH'$. Thus we may assume that  $K_X+\Delta+A+H'$ is ample. Therefore  we have $r_l:=r_l(H';K_X+\Delta+A)>1$.

Here we let $\Delta':=\Delta+A+\frac{1}{r_l}H'$, and $S:=(1-\frac{1}{r_l})H'$.
Now we run $(K_X+\Delta')$-MMP with scaling of $S$:
$$
\xymatrix{
X=X_0 \ar@{-->}[r]^{\varphi_1} &X_1 \ar@{-->}[r]^{\varphi_2} & X_2\ar@{-->}[r]^{\varphi_3} &\cdots \ar@{-->}[r]^{\varphi_{m}}& X_m.
}
$$
In fact, this MMP is also $(K_X+\Delta+A)$-MMP with scaling by  $H'$ and this MMP terminates by \cite{bchm}. Now we use the same notations as in Subsection 2.1. By Remark \ref{rem-mmp}, for $i\geq 0$, i.e. $t_i=r_1(H'_i; K_{X_i}+\Delta'_i)$, where $\Delta_i, A_i, H'_i, \Delta'_i, S_i$ are strict transforms of  $\Delta, A, H', \Delta', S$ on $X_i$, respectively. Set  $D_{i+1}:=\varphi_{i*}(H'_i+t_i(K_{X_i}+\Delta_i+A_i))$. Then we have
$$t_{i+1}-t_i=r_1(D_{i+1}; K_{X_{i+1}}+\Delta_{i+1}+A_{i+1}).
$$

Now we  will first show that for each $i$, the denominator of $t_i$ can be bounded from above by some constant $M_{i}=M_{i}(U,X, \Delta+A) $ by induction on $i$.  First of all, since $r_1(D_{i+1}; K_{X_{i}}+\Delta_{i}+A_{i})$ is a nef threshold, Theorem \ref{thrm-rationality} implies that it is a rational number whose positive  denominator can be bounded  from above by some constant  $N_{i}=N_{i}(X, \Delta+A,t_{i-1})$. Since $X_i$  has only $\mathbb{Q}$-factorial rational singularities, Theorem \ref{thrm-finite models} and Theorem \ref{thrm-rationality} imply that we may take $N_i=N_i(U,X, \Delta+A,t_{i-1})=N_{i}(U,X, \Delta+A)$ which depends only on the cone $U$ and the pair $(X,\Delta+A)$. Note also that the positive  denominator of $t_0=r_1(H'; K_X+\Delta)$ can  be also bounded from above by some constant $M_{0}=M_{0}(X, \Delta+A)$ since
$X$ has only $\mathbb{Q}$-factorial rational singularities.
\\
To prove by induction, we assume that the denominator of  $t_{i-1}$ can be bounded from above by some constant $M_{i-1}=M_{i-1}(U,X, \Delta+A)$.
Then, by the equation
$$t_{i}-t_{i-1}=r_1(D_{i}; K_{X_{i}}+\Delta_{i}+A_{i}),
$$
the denominator of  $t_{i}$ can be also bounded by the constant $M_{i}:=M_{i-1}N_i$ which depends only on the cone $U$ and the pair $(X, \Delta+A)$.

Next we compare $t_{m-1}$ and $r_{l}$. In general, it holds that $r_l \geq t_{m-1}$. We will prove this first. By Lemma \ref{lem-boundary Amp_l}, $K_X+\Delta+A+\frac{1}{r_l}H' \in \partial \overline{\mathrm{Amp}_l}$ implies that $\mathbf{B}_{+}(K_X+\Delta+A+\frac{1}{r_l}H' )$ has some irreducible component $V$ of dimension $\geq l$ such that  $V \not \subseteq \bm(K_X+\Delta+A+\frac{1}{r_l}H' )$.  Thus an MMP $X\dashrightarrow X_m$ is isomorphic at the generic point of $V$. Denote the strict transform of $V$ on $X'$ by $V'$. Then $V' \subseteq \bp (K_{X_m}+\Delta_m+A_m+\frac{1}{r_l}H_m' )$. Now by contradiction, we assume that $r_l < t_{m-1}$. Then we have some $a$ such that $t_{m-1}\gg a>r_l$ and $\bm (K_{X_m}+\Delta_m+A_m+\frac{1}{a}H'_m )=\mathbf{B}_{+}(K_{X_m}+\Delta_m+A_m+\frac{1}{r_l}H'_m )$.  Thus by $\mathbf{B}_{-}(K_{X_m}+\Delta_m+A_m+\frac{1}{a}H'_m) \supseteq V'$, in particular $\bm (K_{X_m}+\Delta_m+A_m+\frac{1}{a}H'_m) \not = \emptyset$, i.e. $K_{X_m}+\Delta_m+A_m+\frac{1}{a}H'_m$ is not nef. This is a contradiction to the fact that  $K_{X_m}+\Delta_m+A_m+\frac{1}{r_l}H'_m$ and $K_{X_m}+\Delta_m+A_m+\frac{1}{t_{m-1}}H'_m$ are nef.  Thus we can conclude that $r_l \geq t_{m-1}$. 

Since Theorem \ref{g-rationality thm} is already proved  when $r_l = t_{m-1}$, we may assume that  $r_l > t_{m-1}$. Since $K_{X_m}+\Delta_m+A_m+\frac{1}{t_{m-1}}H'_m$ is nef,  by Lemma \ref{lem-boundary Amp_l},  $K_{X_m}+\Delta_m+A_m+\frac{1}{r_l}H'_m \in  \partial \overline{\mathrm{Amp}_1}$ holds. Thus
$$r_1(K_{X_m}+\Delta_m+A_m, t_{m-1}(K_{X_m}+\Delta_m+A_m)+H'_m)=r_l-t_{m-1}
$$ holds. By Theorem \ref{thrm-finite models} and Theorem \ref{thrm-rationality} together with the fact that $X_i$  has only $\mathbb{Q}$-factorial rational singularities, the denominator of  $r_1(K_{X_m}+\Delta_m+A_m, t_{m-1}(K_{X_m}+\Delta_m+A_m)+H'_m)$ can be bounded from above by some constant $M'=M'(U, X,\Delta+A )$. Therefore, once we let $M:=M'M_{m-1}$, we finish the proof of Theorem $\ref{g-rationality thm}$.
 \end{proof}

\section{Proof of Theorem \ref{main theorem}}\label{sect-proofs}

In this section, we prove the statements (1),(2) and (3) of Theorem \ref{main theorem}. The proofs will be given at the end of the section after collecting the ingredients.

\bigskip

The following is the key result used in the proof.

\begin{proposition}\label{prop-main prop}
Let $(X,\Delta)$ be a $\Q$-factorial klt pair.
Then for any ample divisor $H$ on $X$, the set $\Sigma^l(X,\Delta+H)$ of $(K_X+\Delta+H)$-negative extremal rays of $\overline{\NE}^l(X,\Delta+H)$ is finite.
\end{proposition}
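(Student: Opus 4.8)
The plan is to derive the finiteness of $\Sigma^l(X,\Delta+H)$ from the generalized Rationality Theorem (Theorem \ref{g-rationality thm}), combined with the finiteness of minimal models (Theorem \ref{thrm-finite models}), in a way that parallels the classical deduction of the Cone Theorem from the Rationality Theorem. First I would dispose of the trivial case: if $\overline{\NE}^l(X,\Delta+H)_{K_X+\Delta+H<0}=\emptyset$, there is nothing to prove, so assume it is nonempty. By Theorem \ref{thrm-l-nef criterion} this is equivalent to $K_X+\Delta+H\not\in\overline{\Amp}_{d-l}(X)$. Writing $A$ for a small ample piece of $H$ and absorbing the rest into a klt boundary, I would arrange to be in the situation of Theorem \ref{g-rationality thm} with $l$ replaced by $d-l$: namely $(X,\Delta'+A)$ klt with $K_X+\Delta'+A\not\in\overline{\Amp}_{d-l}(X)$, and $\overline{\NE}^l(X,\Delta+H)=\overline{\NE}(X)_{K_X+\Delta+H\ge 0}+\overline{\Amp}_{d-l}(X)^\vee$ by \cite[Theorem 4.3]{C} (as already invoked in the proof of Theorem \ref{thrm-l-nef criterion}).

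The core of the argument is the following boundedness-plus-discreteness mechanism. Pick a basis $H_1,\dots,H_\rho$ of $N^1(X)_{\R}$ consisting of ample $\Z$-divisors lying in the cone $U$ furnished by Theorem \ref{g-rationality thm} (equivalently by Theorem \ref{thrm-finite models}), so that each $H_j\in(\R_{\ge 0}[K_X+\Delta+H]+U)\cap\overline{\Amp}_{d-l}(X)$ after a harmless positive rescaling. For each $j$, Theorem \ref{g-rationality thm} gives that the threshold $r_{d-l}(H_j;K_X+\Delta+H)$ is rational with denominator bounded by a constant $M=M(U,X,\Delta+H)$ independent of $j$. Consequently the supporting hyperplane $\{\eta\mid \eta\cdot(H_j+r_{d-l}(H_j;K_X+\Delta+H)(K_X+\Delta+H))=0\}$ of $\overline{\Amp}_{d-l}(X)^\vee$, intersected with $\overline{\NE}^l(X,\Delta+H)$, is cut out by divisor classes lying in a lattice of bounded denominator. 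Now if $R$ is a $(K_X+\Delta+H)$-negative extremal ray of $\overline{\NE}^l(X,\Delta+H)$, then (exactly as in the classical proof) $R$ must be exposed by some rational supporting functional whose ``distance'' from the wall $K_X+\Delta+H=0$ is one of these bounded-denominator thresholds; a class $\eta$ spanning $R$ then satisfies a system of linear equations with bounded-height coefficients, which pins $[\eta]$ down to finitely many rays. Quantitatively, I would run the standard argument: for a rational point $D$ in the interior of $(\R_{\ge0}[K_X+\Delta+H]+\Amp_1(X))$ and for each $H_j$, the extremal rays on which $K_X+\Delta+H$ is negative and $H_j$ is $\le 0$ on the $r$-th wall are finite in number because they correspond to the (finitely many, by bounded denominator) rational values of a one-parameter family of thresholds; varying over the finitely many basis elements $H_j$ and intersecting, only finitely many extremal rays survive.

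More concretely, here is the step I would actually carry out: suppose for contradiction that $\Sigma^l(X,\Delta+H)$ is infinite, so there is a sequence of distinct extremal rays $R_n$ with spanning classes $\eta_n$, all $(K_X+\Delta+H)$-negative. After normalizing and passing to a subsequence, $\eta_n\to\eta_\infty$ in $N_1(X)$ with $\eta_\infty\cdot(K_X+\Delta+H)\le 0$. If $\eta_\infty\cdot(K_X+\Delta+H)<0$, then $\eta_\infty$ lies in the open half-space, hence in the relative interior of a face of $\overline{\NE}^l(X,\Delta+H)=\overline{\NE}(X)_{K_X+\Delta+H\ge0}+\overline{\Amp}_{d-l}(X)^\vee$ that is supported by a hyperplane dual to a boundary point of $\overline{\Amp}_{d-l}(X)$; the defining functional of that hyperplane can be taken of the shape $H_j+r_{d-l}(H_j;K_X+\Delta+H)(K_X+\Delta+H)$ for a suitable $j$ (after adjusting $H_j$ within $U$), and rationality with bounded denominator forces the $R_n$ accumulating there to be eventually constant — a contradiction. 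Hence $\eta_\infty\cdot(K_X+\Delta+H)=0$, i.e. the $R_n$ accumulate only toward the wall $K_X+\Delta+H=0$. But $K_X+\Delta+H$ is the boundary divisor of a \emph{klt} pair with big $H$-part, so running the $(K_X+\Delta+H)$-MMP with scaling (Theorem \ref{thrm-term'n MMP scaling}) and using $\overline{\NE}^l(X,\Delta+H)\subseteq\overline{\NE}(X)$ together with the usual compactness argument shows rays cannot accumulate at the $(K_X+\Delta+H)$-trivial wall from the negative side either; this is where the length-of-extremal-rays bound of \cite{kawamata-length} (already used in the proof of Theorem \ref{g-rationality thm}) and the finiteness of minimal models enter. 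Either way we reach a contradiction, so $\Sigma^l(X,\Delta+H)$ is finite.

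\textbf{Main obstacle.} The delicate point is not the rationality per se — Theorem \ref{g-rationality thm} hands that to us with a \emph{uniform} denominator bound — but rather ensuring that \emph{every} $(K_X+\Delta+H)$-negative extremal ray of the enlarged cone $\overline{\NE}^l(X,\Delta+H)$ is genuinely exposed by one of the finitely many ``good'' supporting functionals built from the basis $\{H_j\}\subseteq U$, i.e. that the cone $U$ from Theorem \ref{thrm-finite models} really controls all relevant walls simultaneously. Since $\overline{\bNM}^l(X)$ (equivalently $\overline{\Amp}_{d-l}(X)^\vee$) can have curved boundary in the $(K_X+\Delta)$-negative region, one must be careful that ``extremal ray of the enlarged cone'' is compatible with ``face dual to a boundary point of $\overline{\Amp}_{d-l}(X)$ where the relevant threshold is rational with bounded denominator''. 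Managing this bookkeeping — choosing $U$ large enough and the finitely many test divisors $H_j$ so that their associated thresholds detect all the coextremal rays — is the technical heart of the proof.
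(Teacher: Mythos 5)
The overall strategy is the right one — reduce to $K_X+\Delta+H\not\in\overline{\Amp}_{d-l}(X)$, invoke Theorem \ref{g-rationality thm} to get thresholds with uniformly bounded denominator, and turn this into discreteness of rays — and in that sense your proposal is in the same spirit as the paper's proof. But there are two genuine gaps, and your ``main obstacle'' paragraph correctly identifies one of them without filling it.

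The first gap is the dimension-reduction step. You posit that every $(K_X+\Delta+H)$-negative extremal ray of $\overline{\NE}^l(X,\Delta+H)$ is exposed by a supporting functional of the form $H_j+r_{d-l}(H_j;K_X+\Delta+H)(K_X+\Delta+H)$ for one of a fixed basis $\{H_j\}\subseteq U$, but this is not automatic: those functionals only yield $\rho$ faces, and nothing forces each face to be one-dimensional, nor every coextremal ray to lie on one of them. The paper supplies the missing mechanism: starting from an arbitrary Cartier $L\in(\R_{\geq 0}[K_X+\Delta+H]+\Amp_1(X))\cap\partial\overline{\Amp}_{d-l}(X)$ with face $F_L$ of dimension $>1$, one forms $r_L(n,A)=r_{d-l}(nL+A;K_X+\Delta+H)$ for an ample Cartier $A\in U$; this sequence is monotone, bounded, and takes values in the discrete set of rationals with denominator $\le M(U,X,\Delta+H)$, so it stabilizes at some $n_0$, producing a new supporting divisor $L(A)=(n_0+1)L+A+r_L(A)(K_X+\Delta+H)$ on $\partial\overline{\Amp}_{d-l}(X)$ with $F_{L(A)}\subsetneq F_L$ for general $A$. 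Inducting on $\dim F_L$ cuts every face down to a ray. That perturbation loop, not a once-and-for-all basis of test divisors, is what actually exposes each coextremal ray by a bounded-denominator functional and makes the discreteness argument (via the affine coordinates $\eta\mapsto A'_i\cdot\eta/(K_X+\Delta+H)\cdot\eta$) go through in the open half-space $K_X+\Delta+H<0$.

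The second gap is the accumulation on the wall $K_X+\Delta+H=0$. You write that running the $(K_X+\Delta+H)$-MMP with scaling plus the length-of-extremal-rays bound rules this out, but that step does not work as stated: a $(K_X+\Delta+H)$-negative extremal ray of $\overline{\NE}^l(X,\Delta+H)$ is in general \emph{not} an extremal ray of the Mori cone $\overline{\NE}(X)$ (the boundary of $\overline{\bNM}^l(X)$ can be curved in the negative region), so Kawamata's length bound and the standard MMP-with-scaling compactness argument have no direct purchase on it. The paper's fix is a monotonicity trick that avoids this entirely: since $\Sigma^l(X,\Delta+H)\subseteq\Sigma^l(X,\Delta+(1-\eps)H)$ for small $\eps>0$, any accumulation of rays of $\Sigma^l(X,\Delta+H)$ toward the hyperplane $K_X+\Delta+H=0$ would be an accumulation of rays of $\Sigma^l(X,\Delta+(1-\eps)H)$ \emph{strictly inside} the half-space $K_X+\Delta+(1-\eps)H<0$, which the previous (bounded-denominator, open-half-space) argument already forbids when applied with $(1-\eps)H$ in place of $H$. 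You would need to replace your MMP appeal with this (or an equivalent) argument.
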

The proof is a modification of the proof of the Cone Theorem (Theorem \ref{thrm-Cone theorem}) into our general situation.

\begin{proof}
First of all, since $H$ is ample, we may assume that $\Delta$ is a $\Q$-divisor by slightly perturbing $\Delta$ to a smaller effective $\Q$-divisor $\Delta'$ and adding the difference $\Delta-\Delta'$ to $H$.
Furthermore, since we can find an ample $\Q$-divisor $H'$ such that $\Sigma^l(X,\Delta+H)\subseteq\Sigma^l(X,\Delta+H')$,
it is enough to show the finiteness of $\Sigma^l(X,\Delta+H')$. Thus we may also assume that $H$ is a $\Q$-divisor. Clearly, we may still assume that $(X,\Delta+H)$ is klt.

If $\Sigma^l(X,\Delta+H)\neq\emptyset$, then Theorem \ref{thrm-l-nef criterion} implies that $K_X+\Delta+H\not\in\overline{\Amp}_{d-l}(X)$.
Let $L$ be a Cartier divisor in $(\mathbb R_{\geq0}[K_X+\Delta+H]+\Amp_1(X))\cap\partial\overline{\Amp}_{d-l}(X)$.
Then we have
$$
F_L\not\subseteq\overline{\NE}^l(X,\Delta)_{K_X+\Delta+H\geq0}
$$
where $F_L:=\overline{\bNM}^l(X)\cap\{\eta\in N_1(X)|\;\eta\cdot L=0\}$ is a non-trivial face of $\overline{\bNM}^l(X)$.

Our goal is to perturb $L$ to a Cartier  divisor $L'$ within $(\mathbb R_{\geq0}[K_X+\Delta+H]+\Amp_1(X))\cap\partial\overline{\Amp}_{d-l}(X)$ so that
$F_{L'}\subseteq F_L$ and $\dim F_{L'}=1$.
If we already have  $\dim F_L=1$, then no perturbation is necessary. 
Thus let us assume that $\dim F_L>1$.
We can find an ample $\Q$-divisor $H'$ and an open subcone $U\subseteq\Amp_1(X)$ containing $H'$ such that there are finitely many minimal models for the klt pairs $(X,\Delta+H+H'')$ for $H''\in U$. Then by Proposition \ref{g-rationality thm}, there exists a constant $M(U,X,\Delta+H)>0$ depending only on the pair $(X,\Delta)$ and $U$ such that for any $\Z$-divisor $D\in(\R_{>0}[K_X+\Delta+H]+U)\cap\overline{\Amp}_{d-l}(X)$, the threshold $r_l(D;K_X+\Delta+H)$ is a rational number of the form $\frac{\alpha}{\beta}$ for some positive integers $\alpha,\beta$ with $0<\beta<M(U,X,\Delta+H)$.
Take an ample Cartier divisor $A\in U$ and consider the following sequence with $n\in\mathbb N$,
$$
r_L(n,A):=r_{d-l}(nL+A;K_X+\Delta+H)=\sup\{t\in\R_{\geq 0}| n L+A+t(K_X+\Delta+H)\in\overline{\Amp}_{d-l}(X)\}.
$$
Note that $nL+A\in(\R_{>0}[K_X+\Delta+H]+U)\cap\overline{\Amp}_{d-l}(X)$.
Since $L\in\overline{\Amp}_{d-l}(X)$, $r_L(n,A)$ is non-decreasing as $n$ increases.
Note that $r_L(n,A)$ is also a sequence bounded above since
$$
r_L(n,A)\leq \frac{(n L+A)\cdot z}{-(K_X+\Delta+H)\cdot z}=\frac{A\cdot z}{-(K_X+\Delta+H)\cdot z}
$$
for any fixed $z\in F_L$.
Therefore, $r_L(n,A)$ stabilizes for all sufficiently large $n>0$, i.e., there exists $n_0>0$ such that for all $n\geq n_0$,
$r_L(n,A)=r_L(n_0,A)$. Denote this constant by $r_L(A):=r_L(n_0,A)$.

Note that by construction, $L$ and $L(A):=L(n_0,A)=(n_0+1) L+A+r_L(A)(K_X+\Delta+H)$ are both in $(\mathbb R_{\geq0}[K_X+\Delta+H]+\Amp_1(X))\cap\partial\overline{\Amp}_{d-l}(X)$. Furthermore, by choosing a general $A\in U$, we have
$F_{L(A)}\subseteq F_{L}$ and $F_{L(A)}\not\subseteq \overline{\NE}^l(X)_{K_X+\Delta+H\geq0}$.

Let $\{A_i\in U|i=1,2,\cdots,\rho\}$ where $\rho:=\dim N^1(X)_{\R}$ be the set of general ample divisors in $U$ which forms a basis of $N^1(X)_{\R}$.
Then not all the hyperplanes $\{\eta|\; L(A_i)\cdot \eta =0\}$ contain $F_L$.
Thus there exists an ample divisor $A_i$ such that $0\subsetneq F_{L(A_i)}\subsetneq F_L$. Consequently, we have
$$
\dim F_{L(A_i)}<\dim F_L.
$$
By induction on the dimension, we obtain a divisor $L(A)\in (\mathbb R_{\geq0}[K_X+\Delta+H]+\Amp_1(X))\cap\partial\overline{\Amp}_{d-l}(X)$ such that
$F_{L(A)}\subseteq F_L$ and $\dim F_{L(A)}=1$.
The ray $R:=F_{L(A)}$ is an element of $\Sigma^l(X,\Delta+H)$.
Therefore, if we let $\Sigma$ be the set of all the $(K_X+\Delta+H)$-negative extremal rays of $\overline{\NE}^l(X,\Delta+H)$ that are of the form $F_{L(A)}$ for some
$L\in (\mathbb R_{\geq0}[K_X+\Delta+H]+\Amp_1(X))\cap\partial\overline{\Amp}_{d-l}(X)$ and an ample divisor $A$ in some open subcone $U\subseteq\Amp_1(X)$ (associated with $L$), then $\Sigma\subseteq\Sigma^l(X,\Delta+H)$.

We next claim the following equality:
\begin{equation}\label{*}
\overline{\NE}^l(X,\Delta+H)=\overline{\NE}(X)_{K_X+\Delta+H\geq 0}+\overline{\sum_{R\in\Sigma}R}.
\end{equation}
Since $\Sigma\subseteq\Sigma^l(X,\Delta+H)$ holds, the inclusion $\supseteq$ holds in (\ref{*}).
If this inclusion is strict, there exists an open subcone $U\subseteq\Amp_1(X)$ such that for any ample divisor $A\in U$ with $r_{d-l}:=r_{d-l}(A;K_X+\Delta+H)$,
$A+r_{d-l}(K_X+\Delta+H)$ is positive on the cone on the right hand side of (\ref{*}) except at the origin.
Let $L=A+r_{d-l}(K_X+\Delta+H)$ for an ample Cartier divisor $A\in U$. Then by arguing as above, we can find a general ample divisor $A'\in U$ such that $\dim F_{L(A')}=1$. Thus $R=F_{L(A')}$ is a ray in $\Sigma^l(X,\Delta+H)$. This is a contradiction because its supporting divisor $L(A')$ is positive on the cone $\overline{\NE}^l(X,\Delta+H)\setminus\{0\}$.

Next we remove the closure in the last summation in (\ref{*}) by showing that the rays in $\Sigma$ has no accumulation in the half space $K_X+\Delta+H<0$.
Suppose that the rays in $\Sigma$ accumulate somewhere in $K_X+\Delta+H<0$. Then it implies that by considering the dual in $N^1(X)$, for some sufficiently small open subcone $U\subseteq\Amp_1(X)$, there exists a convergent sequence of rays of the form $R=\R_{>0}[L(A)]$ for some $L\in(\R_{>0}[K_X+\Delta+H]+U)\cap\partial\overline{\Amp}_l(X)$ and $A\in U$.
Let $\eta_R$ be the generator of the ray $R$.
Choose ample Cartier divisors $A'_i$ ($i=1,\cdots,\rho-1$) in $U$ which together with $K_X+\Delta+H$ form a basis of $N^1(X)$.
Let $\mathbb P(N^1(X))$ be the projectivization of $N^1(X)_{\R}$ with respect to the basis $\{A'_i|\;i=1,2,\cdots,\rho-1\}\cup\{K_X+\Delta+H\}$.
Since the rays accumulate in the half space $K_X+\Delta+H<0$,
they will also form an accumulation point in the affine space $V$ given by $K_X+\Delta+H< 0$.
However, the affine coordinate system
$$
\eta_R\in V\mapsto \left(\frac{A'_1\cdot\eta_R}{(K_X+\Delta+H)\cdot\eta_R},\frac{A'_2\cdot\eta_R}{(K_X+\Delta+H)\cdot\eta_R},\cdots,
\frac{A'_{\rho-1}\cdot\eta_R}{(K_X+\Delta+H)\cdot\eta_R}\right)
$$
does not allow such accumulation since each $\frac{A'_i\cdot\eta_R}{(K_X+\Delta+H)\cdot\eta}$ is a rational number of the form $\frac{\alpha}{\beta}$ for some integers $\alpha,\beta$ with $0<\beta<M(X,\Delta+H,U)$ by Proposition \ref{g-rationality thm}.
Thus we obtain
$$
\overline{\NE}^l(X,\Delta+H)=\overline{\NE}(X)_{K_X+\Delta+H\geq 0}+\sum_{R\in\Sigma}R.
$$
This implies that the rays in $\Sigma^l(X,\Delta+H)$ can only accumulate on the plane $K_X+\Delta+H=0$.

The finiteness of $\Sigma^l(X,\Delta+H)$ can be seen as follows.
Suppose that there are infinitely many rays in $\Sigma^l(X,\Delta+H)$.
Then the rays in $\Sigma^l(X,\Delta+H)$ can possibly accumulate either toward the hyperplane $K_X+\Delta+H=0$ or somewhere in the half space $K_X+\Delta+H<0$. The latter case cannot occur by what we have shown above.
Thus assume that there is a sequence of rays in $\Sigma^l(X,\Delta+H)$ which accumulate toward the hyperplane $K_X+\Delta+H=0$.
Note that for any sufficiently small $\eps>0$, we have $\Sigma^l(X,\Delta+H)\subseteq\Sigma^l(X,\Delta+(1-\eps)H)$.
Therefore, if the rays of $\Sigma^l(X,\Delta+H)$ have an accumulation on $K_X+\Delta+H=0$, then it is also an accumulation of the rays in $\Sigma^l(X,\Delta+(1-\eps)H)$. Furthermore, such accumulation is in the half space $K_X+\Delta+(1-\eps)H<0$. This is again a contradiction to what we proved above.
\end{proof}

\begin{corollary}\label{cor-(1-e) finite}
Let $(X,\Delta)$ be a $\Q$-factorial klt pair and $H$ an ample divisor on $X$.
Then for any $\eps$ such that $0<\eps\leq 1$, the set $\Sigma^l(X,\Delta+\eps H)_{K_X+\Delta+H<0}$ of $(K_X+\Delta+H)$-negative extremal rays of $\overline{\NE}^l(X,\Delta+H)$ is finite.
\end{corollary}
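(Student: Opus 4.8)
The plan is to deduce the corollary immediately from Proposition \ref{prop-main prop}, which already contains all the nontrivial input (the finiteness of minimal models and the generalized rationality theorem). The only point to be observed is that, although the bounding divisor is $K_X+\Delta+H$ rather than $K_X+\Delta+\eps H$, the two notions of negativity agree on the rays that matter, because $H$ is ample, $0<\eps\leq 1$, and every ray of $\overline{\NE}^l(X,\Delta+\eps H)$ lies in $\overline{\NE}(X)$.

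First I would fix $\eps$ with $0<\eps\leq 1$ and note that $\eps H$ is again an ample divisor. Applying Proposition \ref{prop-main prop} to the $\Q$-factorial klt pair $(X,\Delta)$ and the ample divisor $\eps H$, the set $\Sigma^l(X,\Delta+\eps H)$ of $(K_X+\Delta+\eps H)$-negative extremal rays of $\overline{\NE}^l(X,\Delta+\eps H)$ is finite. It then suffices to see that $\Sigma^l(X,\Delta+\eps H)_{K_X+\Delta+H<0}$ is contained in this finite set.

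To establish the inclusion $\Sigma^l(X,\Delta+\eps H)_{K_X+\Delta+H<0}\subseteq\Sigma^l(X,\Delta+\eps H)$, I would take an extremal ray $R=\R_{\geq0}\eta$ of $\overline{\NE}^l(X,\Delta+\eps H)$ with $(K_X+\Delta+H)\cdot\eta<0$. Since $\overline{\NE}^l(X,\Delta+\eps H)\subseteq\overline{\NE}(X)$ (as noted in the introduction) and $H$ is ample, Kleiman's criterion gives $H\cdot\eta>0$; hence, using $0<\eps\leq 1$,
$$
(K_X+\Delta+\eps H)\cdot\eta=(K_X+\Delta+H)\cdot\eta-(1-\eps)\,H\cdot\eta\leq(K_X+\Delta+H)\cdot\eta<0,
$$
so $R$ is also a $(K_X+\Delta+\eps H)$-negative extremal ray of $\overline{\NE}^l(X,\Delta+\eps H)$, i.e.\ $R\in\Sigma^l(X,\Delta+\eps H)$. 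The asserted finiteness follows, a subset of a finite set being finite.

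I do not expect any genuine obstacle here: once Proposition \ref{prop-main prop} is available the argument reduces to the short sign estimate above, whose only subtlety is keeping in mind that the rays under consideration live in $\overline{\NE}(X)$ — which is exactly where ampleness of $H$ forces the positivity $H\cdot\eta>0$ that makes the comparison of the two bounding divisors work.
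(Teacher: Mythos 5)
Your proof is correct and follows the same route as the paper: apply Proposition \ref{prop-main prop} to $(X,\Delta)$ with the ample divisor $\eps H$ to get finiteness of $\Sigma^l(X,\Delta+\eps H)$, then conclude via the inclusion $\Sigma^l(X,\Delta+\eps H)_{K_X+\Delta+H<0}\subseteq\Sigma^l(X,\Delta+\eps H)$. The paper simply asserts this inclusion, whereas you usefully spell out why it holds (ampleness of $H$ and $\overline{\NE}^l(X,\Delta+\eps H)\subseteq\overline{\NE}(X)$ give $H\cdot\eta>0$, so $(K_X+\Delta+\eps H)\cdot\eta\leq(K_X+\Delta+H)\cdot\eta<0$).
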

\begin{proof}
It follows from the inclusion $\Sigma^l(X,\Delta+\eps H)\supseteq \Sigma^l(X,\Delta+\eps H)_{K_X+\Delta+H<0}$ and the fact that  $\Sigma^l(X,\Delta+\eps H)$ is finite by Proposition \ref{prop-main prop}.
\end{proof}

\bigskip

The following results explain where the rays of $\Sigma^l(X,\Delta)$ can accumulate.

\begin{proposition}\label{prop-S^l_H=sum}
Let $H$ be an ample divisor on $X$.
If $0<\eps\leq\eps'\leq 1$, then we have
$$
\Sigma^l(X,\Delta)_{K_X+\Delta+H<0}\supseteq \Sigma^l(X,\Delta+\eps H)_{K_X+\Delta+H<0}\supseteq \Sigma^l(X,\Delta+\eps' H)_{K_X+\Delta+H<0}.
$$
Furthermore, we have either
$$
\Sigma^l(X,\Delta)_{K_X+\Delta+H<0}=\bigcup_{0<\eps\leq 1}\Sigma^l(X,\Delta+\eps H)_{K_X+\Delta+H<0}
$$
or
$$
\Sigma^l(X,\Delta)_{K_X+\Delta+H<0}=\overline{\bigcup_{0<\eps\leq 1}\Sigma^l(X,\Delta+\eps H)_{K_X+\Delta+H<0}}
$$
where the closure is defined by the following topology:  Take some  plane $P$ such that  $P\cap \overline{NE}^l(X,\Delta)$ is compact and   for rays $R, R'$, $||R,R'||:=||R\cap P, R'\cap P||$ gives the induced topology on the set of extremal rays. Note that the topology is independent of the choice of $P$.
\end{proposition}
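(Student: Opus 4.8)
\emph{Plan of proof.}
Write $C:=\overline{\NE}^l(X,\Delta)$ and $C_\eps:=\overline{\NE}^l(X,\Delta+\eps H)$ for $0<\eps\le 1$, so that $C_\eps=\overline{\NE}(X)_{K_X+\Delta+\eps H\ge 0}+\overline{\bNM}^l(X)$ and $C=\overline{\NE}(X)_{K_X+\Delta\ge 0}+\overline{\bNM}^l(X)$. The plan is to prove the sandwich $A\subseteq B\subseteq\overline A$, where $A:=\bigcup_{0<\eps\le 1}\Sigma^l(X,\Delta+\eps H)_{K_X+\Delta+H<0}$ and $B:=\Sigma^l(X,\Delta)_{K_X+\Delta+H<0}$, together with the nested inclusions, and then to show that $B$ must coincide with one of the two ends.

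First I would settle the chain of inclusions. Since $H$ is ample, $\overline{\NE}(X)_{K_X+\Delta+tH\ge 0}\subseteq\overline{\NE}(X)_{K_X+\Delta+t'H\ge 0}$ whenever $0\le t\le t'$, hence $C\subseteq C_\eps\subseteq C_{\eps'}$ for $0<\eps\le\eps'\le 1$. Now take $R\in\Sigma^l(X,\Delta+\eps'H)_{K_X+\Delta+H<0}$. Because $\eps'\le 1$ and $H$ is ample, $R\cdot(K_X+\Delta+\eps'H)\le R\cdot(K_X+\Delta+H)<0$, so $R$ is in fact a coextremal ray of $(X,\Delta+\eps'H)$; and, as in the discussion following Theorem~\ref{thrm-l-nef criterion} (or as produced explicitly in the proof of Proposition~\ref{prop-main prop}), every coextremal ray of $(X,\Delta+\eps'H)$ lies in $\overline{\bNM}^l(X)\subseteq C_\eps$. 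An extremal ray of a convex cone which happens to lie inside a subcone is automatically an extremal ray of that subcone, so $R$ is an extremal ray of $C_\eps$, still $(K_X+\Delta+H)$-negative, i.e. $R\in\Sigma^l(X,\Delta+\eps H)_{K_X+\Delta+H<0}$. Running the same argument with $C$ in place of $C_\eps$ gives the inclusion into $B$, completing the chain (and also $A\subseteq B$).

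Next I would prove $B\subseteq\overline A$. Fix a slice $P$ with $P\cap C$ compact. An elementary argument (using that $\overline{\NE}(X)$ is pointed, so a bounded vector $v=a_\eps+b_\eps$ with $a_\eps\in\overline{\NE}(X)_{K_X+\Delta+\eps H\ge 0}$, $b_\eps\in\overline{\bNM}^l(X)$ has $a_\eps,b_\eps$ bounded, and passing to the limit as $\eps\to 0$) shows $C_\eps\downarrow C$, hence $P\cap C_\eps\downarrow P\cap C$; this is the intersection formula already used in STEP3 of the outline. Let $R\in B$ and put $r=R\cap P$: then $r$ is an extreme point of $P\cap C$ lying in the open half-space $\{K_X+\Delta+H<0\}$. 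Since $P\cap C_\eps\downarrow P\cap C$, a standard compactness argument (write $r$ as a convex combination of extreme points of $P\cap C_\eps$; every cluster point of these lies in $P\cap C$, and extremality of $r$ forces them all to converge to $r$) yields extreme points $r_\eps\in P\cap C_\eps$ with $r_\eps\to r$. For $\eps$ small $r_\eps$ still lies in $\{K_X+\Delta+H<0\}$, so the corresponding ray $R_\eps$ lies in $\Sigma^l(X,\Delta+\eps H)_{K_X+\Delta+H<0}$ and $R_\eps\to R$; thus $R\in\overline A$.

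It remains to establish the dichotomy: granted $A\subseteq B\subseteq\overline A$, one must show that either $B\subseteq A$ (whence $B=A$) or $B$ is closed (whence $B=\overline A$, since $\overline A\subseteq\overline B=B$). For this I would analyze a hypothetical ray $R_0\in B\setminus A$: by the previous step $R_0$ is a limit of infinitely many distinct rays of $A$, so it lies on a strictly convex ("curved") portion of $\partial C$, and deep in $\{K_X+\Delta<0\}$ this boundary coincides with $\partial\overline{\bNM}^l(X)$, so $R_0$ sits on a curved arc of $\partial\overline{\bNM}^l(X)$ all of whose rays are extreme. The point — and this is where I expect the real difficulty — is to show that the existence of even one such $R_0$ forces \emph{every} limit ray of $A$ to again be an extreme, $(K_X+\Delta+H)$-negative ray of $C$: the quantitative discreteness coming from the generalized Rationality Theorem (Theorem~\ref{g-rationality thm}) applied to each pair $(X,\Delta+\eps H)$ should prevent $A$ from accumulating toward the hyperplane $\{K_X+\Delta+H=0\}$ in this case, while the local convex geometry of $\overline{\bNM}^l(X)$ near $R_0$ should prevent $A$ from accumulating toward a non-extreme ray of $C$. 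Granting this, $\overline A\setminus A\subseteq B\setminus A$, so $B=\overline A$; and if no such $R_0$ exists, then $B=A$. The main obstacle is precisely this last step: the sandwich $A\subseteq B\subseteq\overline A$ does not by itself exclude $B$ lying strictly between $A$ and $\overline A$, and ruling this out requires combining the bounded-denominator control of Theorem~\ref{g-rationality thm} with an understanding of the (possibly curved, when $l\ge 1$) boundary of $\overline{\bNM}^l(X)$.
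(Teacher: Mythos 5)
Your proof of the nested chain of inclusions and of $A\subseteq B$ is essentially the paper's argument, made slightly more explicit: you observe that a $(K_X+\Delta+\eps' H)$-negative extremal ray of $C_{\eps'}$ lies in $\overline{\bNM}^l(X)\subseteq C_\eps\subseteq C$, and that an extremal ray of a larger cone contained in a subcone is automatically extremal in the subcone. That is the same content the paper compresses into ``by collecting the $(K_X+\Delta+H)$-negative extremal rays of these cones.''

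Where you genuinely diverge from the paper is in the proof of $B\subseteq\overline A$. You slice by a compact cross-section $P$, use $C_\eps\downarrow C$ (so $P\cap C_\eps\downarrow P\cap C$), and run a Carath\'eodory/compactness argument: write the extreme point $r=R\cap P$ as a convex combination of boundedly many extreme points of $P\cap C_\eps$, pass to subsequential limits landing in $P\cap C$, and use extremality of $r$ in $P\cap C$ to force convergence to $r$. That argument is correct, self-contained, and arguably cleaner than what the paper does: the paper instead fixes a supporting hyperplane $F=\{\eta=0\}$ with $R=F\cap\overline{\NE}^l(X,\Delta)$, splits into the cases where $\eta$ is strictly positive on $\overline{\NE}(X)_{K_X+\Delta\geq 0}\setminus\{0\}$ versus where $F$ supports both $\overline{\NE}(X)_{K_X+\Delta\geq 0}$ and $\overline{\bNM}^l(X)$ simultaneously (the first case is shown to force $R\in\Sigma^l(X,\Delta+\eps' H)$ for small $\eps'$, a contradiction), and then invokes discreteness of nearby rays and local rational polyhedrality of $\overline{\NE}^l(X,\Delta)$ at $R$ to conclude accumulation. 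The paper's route relies implicitly on the ray $R$ being exposed (not just extremal) and on structure provided by Proposition~\ref{prop-main prop}; your compactness route avoids both, and is the kind of argument that generalizes more readily.

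On the dichotomy you are right to flag the gap, but you should also note that the paper's own proof does not close it either: after showing that any $R\in B\setminus A$ lies in $\overline A$, the paper simply declares ``we are done,'' which establishes $A\subseteq B\subseteq\overline A$ but does not give $\overline A\subseteq B$ (so strictly speaking the stated alternative ``$B=A$ or $B=\overline A$'' is not justified). For the actual downstream use in the proof of Theorem~\ref{main theorem}~(1), what is needed is the sandwich $A\subseteq B\subseteq\overline A$ — it yields $\overline{\sum_{R\in A}R}=\overline{\sum_{R\in B}R}$, and after adding the closed half-cone $\overline{\NE}^l(X,\Delta)_{K_X+\Delta+H\geq 0}$ the remaining closure can be removed because every $R\in B$ already lies in the closed cone $\overline{\NE}^l(X,\Delta)$. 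So the honest formulation of the proposition is the sandwich you proved, and the additional dichotomy claim is both harder than advertised and not actually required by the application. You should state this explicitly rather than chasing the (possibly false in general) dichotomy via Theorem~\ref{g-rationality thm}; the curvature of $\partial\overline{\bNM}^l(X)$ for $l\geq 1$ is exactly the obstruction to the bounded-denominator argument producing discreteness, as the paper's own Remark~\ref{rmk-Batyrev} hints.
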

\begin{proof}
Note first the following:
$$
\overline{\NE}^l(X,\Delta)\subseteq \overline{\NE}^l(X,\Delta+\eps H)\subseteq \overline{\NE}^l(X,\Delta+\eps' H).
$$
By collecting the $(K_X+\Delta+H)$-negative extremal rays of these cones, we obtain the first inclusions.
(The smaller the $\eps>0$ is, the larger part of the cone $\overline{\bNM}^l(X)$ is exposed to the boundary of the cone $\overline{\NE}^l(X,\Delta+\eps H)$.)
This immediately shows that
\begin{equation}\label{Sigma_inclusion}
\Sigma^l(X,\Delta)_{K_X+\Delta+H<0}\supseteq\bigcup_{0<\eps\leq 1}\Sigma^l(X,\Delta+\eps H)_{K_X+\Delta+H<0}.
\end{equation}

Now suppose that the inclusion is strict and let $R$ be a ray in $\Sigma^l(X,\Delta)_{K_X+\Delta+H<0}$ which is not contained in $\bigcup_{0<\eps\leq 1}\Sigma^l(X,\Delta+\eps H)_{K_X+\Delta+H<0}$. Then since $R$ is a $(K_X+\Delta+H)$-negative extremal ray of $\overline{\NE}^l(X,\Delta)$, there exists a hyperplane $F$ defined by the vanishing of a class $\eta\in N^1(X)$ such that $R=F\cap\overline{\NE}^l(X,\Delta)$ and $\eta$ is non-negative on $\overline{\NE}(X)_{K_X+\Delta\geq 0}\setminus\{0\}$.

If $\eta$ is positive on $\overline{\NE}(X)_{K_X+\Delta\geq 0}\setminus\{0\}$, $\eta$ is also positive on $\overline{\NE}(X)_{K_X+\Delta+\eps' H\geq 0}\setminus\{0\}$ for some sufficiently small $\eps'>0$. Since $F=\{\eta=0\}$ is still a supporting plane of $\overline{\NE}^l(X,\Delta+\eps' H)$ for the ray $R$, $R$ is also a $(K_X+\Delta+H)$-negative extremal ray of $\overline{\NE}^l(X,\Delta+\eps' H)$ for some sufficiently small $\eps'>0$, that is, $R\in\Sigma^l(X,\Delta+\eps' H)_{K_X+\Delta+H<0}$.
Thus we have a contradiction and the hyperplane $F=\{\eta=0\}$ must support both $\overline{\NE}(X)_{K_X+\Delta\geq 0}$ and $\overline{\bNM}^l(X)$ simultaneously.

By assumption, for any hyperplane $F(\eps)$ supporting both $\overline{\NE}(X)_{K_X+\Delta+\eps H\geq 0}$ and $\overline{\bNM}^l(X)$, we have $R\not\subseteq F(\eps)\cap\overline{\bNM}^l(X)$. However, by taking $\eps>0$ arbitrarily small, we can find a  $(K_X+\Delta+\eps H)$-negative extremal ray $R'$ of the face $F(\eps)\cap\overline{\bNM}^l(X)$ which is sufficiently close to the ray $R$.
Otherwise, there would be an open cone $U'$ such that $R$ is the only $(K_X+\Delta+H)$-negative extremal ray of $\overline{\NE}^l(X,\Delta)$ contained in $U'$. 
This implies that $\overline{\NE}^l(X,\Delta)$ is rational polyhedral locally at $R$. Therefore, we can find a hyperplane supporting $\overline{\NE}^l(X,\Delta)$ at $R$ which intersects the half cone $\overline{\NE}(X)_{K_X+\Delta\geq0}$ only at the origin. This is a contradiction to what we proved above about the supporting hyperplane $F=\{\eta=0\}$ of $R$.

Hence, $R$ is a limit of the rays in $\cup_{0<\eps\leq 1}\Sigma^l(X,\Delta+\eps H)_{K_X+\Delta+H<0}$ and we are done.
\end{proof}

\bigskip

\begin{proof}[Proof of (1) of Theorem \ref{main theorem}]
First of all, we have
$$
\overline{\NE}^l(X,\Delta+\eps H)=\overline{\NE}^l(X,\Delta+\eps H)_{K_X+\Delta+ H\geq 0}+\sum\limits_{R\in\Sigma^l(X,\Delta+\eps H)_{K_X+\Delta+H<0}}R
$$
where $\Sigma^l(X,\Delta+\eps H)_{K_X+\Delta+H<0}$ is finite by Corollary \ref{cor-(1-e) finite}.
Thus we have
$$
\begin{array}{rl}
 \bigcap\limits_{0<\eps\leq 1}\overline{\NE}^l(X,\Delta+\eps H) &=\bigcap\limits_{0<\eps\leq 1}\left(\overline{\NE}^l(X,\Delta+\eps H)_{K_X+\Delta+ H\geq 0}+\sum\limits_{R\in\Sigma^l(X,\Delta+\eps H)_{K_X+\Delta+H<0}}R\right)\\
   & =\overline{\NE}^l(X,\Delta)_{K_X+\Delta+H\geq 0}+\overline{\sum R}
\end{array}
$$
where the last summation under the closure is taken over the rays $R$ in the union $\bigcup\limits_{0<\eps\leq 1}\Sigma^l(X,\Delta+\eps H)_{K_X+\Delta+H<0}$.
However, Proposition \ref{prop-S^l_H=sum} implies that we can take the summation over the rays in $\Sigma^l(X,\Delta)_{K_X+\Delta+H<0}$ and remove the closure.
Furthermore, since
$$\overline{\NE}^l(X,\Delta)=\bigcap\limits_{0<\eps\leq 1}\overline{\NE}^l(X,\Delta+\eps H),$$ we have
$$
\overline{\NE}^l(X,\Delta)=\overline{\NE}^l(X,\Delta)_{K_X+\Delta+H\geq0}+\sum_{R\in\Sigma^l(X,\Delta)_{K_X+\Delta+H<0}} R.$$
Finally since the ample divisor $H$ can be chosen arbitrarily small, we obtain
$$
\overline{\NE}^l(X,\Delta)=\overline{\NE}^l(X,\Delta)_{K_X+\Delta\geq0}+\sum_{R\in\Sigma^l(X,\Delta)} R.$$
\end{proof}
\medskip
\begin{proof}[Proof of (2) of Theorem \ref{main theorem}]
Since $\Sigma^l(X,\Delta+H)$ is finite by Proposition \ref{prop-S^l_H=sum},
we have
$$
\overline{\NE}^l(X,\Delta+H)=\overline{\NE}^l(X,\Delta+H)_{K_X+\Delta+H\geq0}+
\sum_{R\in\Sigma^l(X,\Delta+H)}R.
$$
\end{proof}

Note that the rays in $\Sigma^l(X,\Delta)_{K_X+\Delta+H<0}$ can possibly accumulate while the rays in $\Sigma^l(X,\Delta+H)$ are discrete since it is finite.

\begin{proof}[Proof of (3) of Theorem \ref{main theorem}]
Let $H$ be an ample divisor. Assume that $\Sigma^l(X,\Delta+H)\neq\emptyset$ and let $R\in\Sigma^l(X,\Delta+H)$.
If $l=0$, then it follows from \cite{Araujo}.
Thus we assume below that $l>0$.
By the proof of Proposition \ref{prop-main prop}, $R$ is of the form $R=F_D:=\overline{\bNM}^l(X)\cap\{\eta\in N_1(X)|\eta\cdot D=0\}$
for some $\Q$-divisor $D\in (\R_{\geq 0}[K_X+\Delta+H]+\Amp_1(X))\cap\partial\overline{\Amp}_{d-l}(X)$.
(See the proof of Proposition \ref{prop-main prop} for the precise description of $D$.)
Thus we can find an ample $\Q$-divisor $G$ such that $D=K_X+\Delta+H+G$ and we may assume that $(X,\Delta+H+G)$ is klt.

By (\ref{expanded Mori cone}), there exists a small $\Q$-factorial modification $\varphi:X\dashrightarrow X'$ such that $R$ is an extremal ray of $\overline{\NM}^l(X,X')$. This map $\varphi$ can be obtained by running the MMP on $(X,\Delta+H+G)$ with scaling of $aG$ for $a>0$ where we may assume that $(X,\Delta+H+(1+a)G)$ is klt and $K_X+\Delta+H+(a+1)G$ is ample.
By \cite{bchm}, this MMP terminates and we have a nef divisor $D':=\varphi_*(K_X+\Delta+H+G)=K_{X'}+\Delta'+H'+G'$.
We have $R=\overline{\NM}^l(X,X')\cap\varphi^*\{\eta\in N_1(X')|\eta\cdot D'=0\}$.
If we consider $R$ as a ray in $N_1(X')$, then since $D'$ is nef, $R$ is also an extremal ray of the Mori cone $\overline{\NE}(X')$. Furthermore, it is $(K_{X'}+\Delta'+H')$-negative. Therefore, $R$ as a $(K_{X'}+\Delta'+H')$-negative extremal ray of $\overline{\NE}(X')$ in $N_1(X')$, it is spanned by a rational curve $C'$ on $X'$.
Thus the ray $R$ as a ray in $N_1(X)$ is spanned by the numerical pull back of the rational curve $C'$
\end{proof}

We note that the statement (3) does not say anything about the limit of the rays in $\Sigma^l(X,\Delta)$ contained in the hyperplane supporting both cones $\overline{\NE}(X)_{K_X+\Delta\geq 0}$ and $\overline{\bNM}^l(X)$. This is analogous to the case of the Cone Theorem.  The limit of the $(K_X+\Delta)$-negative extremal rays of $\overline{\NE}(X)$ is $(K_X+\Delta)$-trivial and is not guaranteed to be spanned by a rational curve.

\begin{remark}\label{rmk-Batyrev}
In \cite{Batyrev}, Batyrev proved that for a $\Q$-factorial terminal threefold $X$ (with $\Delta=0$) and any ample divisor $A$ on $X$, there exists a finite subset $\Sigma'\subseteq\Sigma^0(X,0)$ of $(K_X+A)$-negative extremal rays such that
$$
\overline{\NE}^0(X,0)=\overline{\NE}^0(X,0)_{K_X+A\geq 0}+\sum_{R\in\Sigma'}R.
$$
This implies that as in the Cone Theorem, the extremal rays in $\Sigma^0(X,0)$ can only accumulate toward the plane $K_X=0$.
Note that this result is slightly stronger than Theorem \ref{thrm-batyrev} (or our result Theorem \ref{main theorem} for the case $l=0$) which allows an accumulation of the $K_X$-negative extremal rays of $\overline{\NE}^0(X,0)$ away from the hyperplane $K_X=0$.
Batyrev's proof is based on the boundedness of terminal Fano threefolds. As is well known, a far more general boundedness result (known as Borisov-Alexeev-Borisov Conjecture) has been proved recently by Birkar \cite{B1},\cite{B2} and this result can be applied to obtain a similar statement for $\Q$-factorial klt pairs. See also \cite[Section 6.]{Lehmann-cone}. It is interesting to ask whether the same holds for the rays in $\Sigma^l(X,\Delta)$ for a $\Q$-factorial klt pair $(X,\Delta)$ and for all $l=0,1,2,\cdots,\dim X-1$: for any ample divisor $A$ on $X$,
$$
\overline{\NE}^l(X,\Delta)=\overline{\NE}^l(X,\Delta)_{K_X+\Delta+A\geq 0}+\sum_{R\in\Sigma''}R
$$
where $\Sigma''$ is a finite subset of $\Sigma^l(X,\Delta)$. Note that since the case where $l=\dim X-1$ corresponds to the Cone Theorem, we are left to verify the cases $l=1,2,\cdots,\dim X-2$.
\end{remark}

\bigskip


\end{document}